\documentclass[12pt]{amsart}   	% use "amsart" instead of "article" for AMSLaTeX format
\usepackage{geometry}                		% See geometry.pdf to learn the layout options. There are lots.
\usepackage{graphicx}				% Use pdf, png, jpg, or epsÂ§ with pdflatex; use eps in DVI mode
								% TeX will automatically convert eps --> pdf in pdflatex		
\usepackage{amssymb}
%Write on arrows
\usepackage{mathtools}
%SetFonts
%\usepackage{palatino}     % special typeface
%SetFonts

\usepackage{amsthm,amssymb,amsmath,amsfonts,extarrows}
\usepackage[all,cmtip,2cell]{xy}
\usepackage{enumitem}
\usepackage[colorlinks=true]{hyperref}

\usepackage[dvipsnames]{xcolor}

\usepackage{comment}
\usepackage{tikz-cd}

% Select what to do with todonotes: 
% \usepackage[disable]{todonotes} % notes not showed
%\usepackage[draft]{todonotes}   % notes showed

%\renewcommand{\proofname}{\color{red} Solution}
\newtheorem{theorem}{Theorem}[section]
\newtheorem{lemma}[theorem]{Lemma}
\newtheorem{cor}[theorem]{Corollary}

\newtheorem{prop}[theorem]{Proposition}
\theoremstyle{definition}

\newtheorem{remark}[theorem]{Remark}

\newtheorem*{theorem*}{Theorem}

\numberwithin{equation}{section}

\pagestyle{headings} %Header displays page # and section headers.
\geometry{margin=1in} % change the margins to 2 inches all round

\newcommand{\overbar}[1]{\mkern 1.5mu\overline{\mkern-1.5mu#1\mkern-1.5mu}\mkern 1.5mu}
\newcommand{\CC}{\mathbb{C}}

\newcommand{\NN}{\mathbb{N}}

\newcommand{\PP}{\mathbb{P}}
\newcommand{\QQ}{\mathbb{Q}}
\newcommand{\RR}{\mathbb{R}}

\newcommand{\ZZ}{\mathbb{Z}}
\newcommand{\bA}{\mathbf{A}}

\newcommand{\bH}{\mathbf{H}}
\newcommand{\bM}{\mathbf{M}}

\newcommand{\bx}{\mathbf{x}}
\newcommand{\by}{\mathbf{y}}
\newcommand{\bz}{\mathbf{z}}

\newcommand{\calA}{\mathcal{A}}
\newcommand{\calB}{\mathcal{B}}
\newcommand{\calC}{\mathcal{C}}
\newcommand{\calD}{\mathcal{D}}
\newcommand{\calE}{\mathcal{E}}

\newcommand{\calJ}{\mathcal{J}}
\newcommand{\calL}{\mathcal{L}}

\newcommand{\calO}{\mathcal{O}}

\newcommand{\calU}{\mathcal{U}}
\newcommand{\calX}{\mathcal{X}}
\newcommand{\calY}{\mathcal{Y}}

\newcommand{\gothA}{\mathfrak{A}}
\newcommand{\gothC}{\mathfrak{C}}

\newcommand{\hfal}{h_{\mathrm{Fal}}}

\DeclareMathOperator{\Hilb}{Hilb}
\DeclareMathOperator{\image}{Im}
\DeclareMathOperator{\id}{id}

\DeclareMathOperator{\Jac}{Jac}

\DeclareMathOperator{\rank}{rank}

\DeclareMathOperator{\Spec}{Spec}

\DeclareMathOperator{\Stab}{Stab}

\begin{document}
\title{Uniformity of quadratic points}
%\date{\today}
\author{Tangli Ge}

\address{Department of Mathematics\\
Brown University\\
Box 1917\\
Providence, RI 02912\\
U.S.A.}
\email{\url{tangli@princeton.edu}}

\thanks{Work supported in part by funds from NSF grant
   DMS-2100548 and DMS-1759514}
%   \subjclass[2010]{ }
%\keywords{ }

\begin{abstract} 
In this paper, we extend a uniformity result of Dimitrov--Gao--Habegger \cite{DGH21} to dimension two and use it to get a uniform bound on the cardinality of the set of all quadratic points for non-hyperelliptic non-bielliptic curves which only depends on the Mordell--Weil rank, the genus of the curve and the degree of the number field.
\end{abstract} 
\maketitle

\section{Introduction}

Let $C$ be a smooth, geometrically irreducible, projective curve defined over a number field $F$, of genus at least $2$. The Mordell conjecture proved by Faltings \cite{Faltings83} states that the set of rational points $C(F)$ on $C$ is finite. If $C$ is moreover non-hyperelliptic and non-bielliptic (i.e., having no $2$-to-$1$ map to an elliptic curve), Harris--Silverman showed in \cite[Corollary 3]{SilvermanHarris} that the set of quadratic points on $C$ is indeed also finite, by applying Faltings's Theorem \cite{Faltings} to its symmetric product. Motivated by this and the recent uniformity result of Dimitrov--Gao--Habegger \cite{DGH21}, we will show that
  
\begin{theorem}\label{quadratic_points}
Let $g\geq 3$ and $d\geq 1$ be integers. Then there exists a constant $c=c(g,d)>0$ such that for any non-hyperelliptic, non-bielliptic, smooth, geometrically irreducible, projective curve $C$ of genus $g$ defined over a number field $F$ with $[F:\QQ]\leq d$, we have
\[
\# C(F,2)\leq c^{1+\rho}
\]where $C(F,2)$ is the set of points on $C$ that are defined over some $F'$ with $[F':F]\leq 2$ and $\rho$ is the rank of the Mordell--Weil group $\Jac(C)(F)$.
\end{theorem}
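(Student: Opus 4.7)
The plan is to reduce counting quadratic points on $C$ to counting $F$-rational points on a surface inside $\Jac(C)$, and then to bound the latter by extending the Dimitrov-Gao-Habegger strategy from curves to surfaces. For the reduction: each $P\in C(F,2)\setminus C(F)$ yields an $F$-rational degree-$2$ divisor $[P]+[\sigma P]\in C^{(2)}(F)$, where $\sigma$ generates $\Gal(F(P)/F)$; the assignment $P\mapsto[P]+[\sigma P]$ is $2$-to-$1$, and together with the diagonal inclusion $C(F)\hookrightarrow C^{(2)}(F)$ this yields $\#C(F,2)\leq 3\#C^{(2)}(F)$. The non-hyperelliptic hypothesis forces $g\geq 3$, so after enlarging $F$ by a bounded degree to fix an $F$-rational degree-$2$ base divisor (a change of constants absorbed into $c(g,d)$) the Abel-Jacobi map $j:C^{(2)}\hookrightarrow A:=\Jac(C)$ is a closed immersion onto a smooth surface $S$. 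A translate $x+E\subset S$ of an elliptic curve would give a degree-$2$ map $C\to E$, contradicting non-bielliptic; an abelian-surface translate $x+B\subset S$ would force $S=x+B$ by dimensions, making $C^{(2)}$ abelian, which likewise contradicts the hypotheses. Thus $S\subset A$ is a surface containing no translate of a positive-dimensional abelian subvariety, and it suffices to bound $\#S(F)\leq c(g,d)^{1+\rho}$ uniformly.

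For the uniform bound on $\#S(F)$ I would transplant the DGH strategy from curves to such surfaces. Realize $S$ as a fiber of a universal family $\mathcal{S}\subset\calA\to\calM$ over a moduli stack (or a suitable finite cover) parametrizing non-hyperelliptic non-bielliptic curves of genus $g$ with appropriate level structure. The three DGH ingredients become: (a) a Gao-Habegger-type height inequality comparing, for $x\in\mathcal{S}(\bar{\QQ})$ outside a proper closed subset, the height of $x$ on the total space $\calA$ with its N\'eron-Tate height in its fiber, with constants depending only on $g$ and $d$; (b) a Mumford-type gap principle separating points of comparable canonical height, a formal consequence of (a); and (c) a R\'emond-Vojta inequality bounding the number of points of $A(F)$ inside any narrow open cone of $A(F)\otimes\RR$, which already operates in arbitrary dimension. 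Given (a), combining (b) and (c) via the usual packing-plus-Vojta counting scheme yields the desired bound $\#S(F)\leq c(g,d)^{1+\rho}$.

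The main obstacle is ingredient (a) in dimension two with constants uniform over $\calM$. By the Gao-Habegger machinery, (a) is equivalent to the Betti map restricted to $\mathcal{S}$ having maximal rank on a Zariski-dense open subset; the pointwise geometric input is precisely the translate-free condition established in the reduction step, so the content lies in promoting this non-degeneracy to a uniform bigness/ampleness statement across $\calM$, controlling in particular the behaviour near its boundary and uniformity as $C$ varies. The packing/counting step should then proceed by direct analogy with the curve case of DGH, so I expect the bulk of the work to be the two-dimensional height inequality rather than the counting.
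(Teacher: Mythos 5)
Your reduction from $C(F,2)$ to $F$-points of a surface $S = W_2(C) \subset \Jac(C)$ that contains no translate of a positive-dimensional abelian subvariety is essentially the same as the paper's, and your geometric argument (an elliptic translate $\Rightarrow$ bielliptic, an abelian-surface translate $\Rightarrow$ $S$ is abelian) correctly reproduces the Harris--Silverman reasoning the paper cites. However, your account of the counting step on the surface has a genuine gap, and you have misidentified where the difficulty lies.

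You write that the Mumford-type gap principle (your ingredient (b)) is ``a formal consequence of (a)'' and that the ``packing-plus-Vojta counting scheme'' should then carry over by direct analogy with the curve case, with the main obstacle being the uniform two-dimensional height inequality (a). This is backwards. The height inequality itself is not new: once non-degeneracy of the Faltings--Zhang image $\calD_m(\calX^{m+1})$ is established (which the paper gets directly from Gao's \cite[Theorem 1.3]{Gao_Bettirank}), the inequality follows from the already-proved DGH Theorem 1.6, so this ingredient is not where the work is. The genuine difficulty, which your proposal does not address, is that in dimension two the gap principle and the ``close points'' structure are qualitatively different from the curve case: the set of points of a surface $X_s$ that are close to a given $P$ need not be finite. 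R\'emond's Mumford inequality in dimension two only applies to triples $(P,P_1,P_2)$ that are isolated in the fiber of $\calD_2$, and when they are not isolated the close points of $P$ trace out a \emph{curve} inside $X_s$. The paper handles this by first proving a uniform Mordell--Lang bound for families of GeM curves in arbitrary abelian schemes (Theorem 1.3), constructed via a new Hilbert-scheme non-degeneracy argument, and then using it both inside R\'emond's surface counting (Proposition 4.3) and in a structural lemma (Lemma 6.1) that shows close points of any $P$ lie on a curve from one of finitely many families. Without this recursive use of the curve theorem, the packing argument simply does not close in dimension two, because a narrow cone or small ball may contain an entire curve's worth of close points.

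Two smaller omissions: (i) the family-level bound you aim for only holds for $s$ of large modular height $h(\tau(s))>c$; the paper disposes of the remaining $s$ by Northcott's theorem (this is where the dependence on $d$ enters, so you cannot simply ``absorb it into $c(g,d)$'' without this step), applying R\'emond's estimate to each of the finitely many exceptional fibers. (ii) Your constant $\#C(F,2)\le 3\#C^{(2)}(F)$ should be checked; the paper uses $\#C(F,2)\le 2\cdot\#W_2(C)(F)$ via the correspondence between Galois orbits and $F$-points of $C^{(2)}$, and also picks up an extra factor from translating $W_2(C)$ by a quadratic base point (raising the exponent from $1+\rho$ to $2+\rho$, re-absorbed at the end). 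These are bookkeeping issues, but they matter for the final constant.
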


Let us call a positive dimensional variety $X$ defined over a field $K$ \emph{geometrically Mordellic} or \emph{GeM}, if the base change  $X_{\bar K}$ to an algebraic closure $\bar K$ of $K$ does not contain subvarieties which are not of general type (see \cite[Definition 1.4]{DV}). For a subvariety of an abelian variety, Ueno  \cite{Ueno} showed that being GeM is equivalent to containing no translates of positive dimensional abelian subvarieties.

Let $\bA_g$ be the fine moduli space of principally polarized abelian varieties over $\bar\QQ$ with symplectic level-$l$-structure where we fix $l\geq 3$ and a primitive $l$-th root of unity; see \S\ref{section_notation_abelianschemes} for more details. Fix a height function $h:\bA_g(\bar\QQ)\rightarrow \RR$ associated to some ample line bundle on a fixed projective compactification $\overbar{\bA_g}$ of $\bA_g$ (see \S\ref{section_notation} for more details). The above theorem relies on a uniform bound of the same quality as in \cite{DGH21}, for the number of rational points on the fibers of a family of $2$-dimensional GeM subvarieties of abelian varieties. Specifically we show the following

\begin{theorem}\label{mainthm}
		Let $S$ be a variety over $\bar\QQ$ and let $\calA\rightarrow S$ be a principally polarized abelian scheme with symplectic level-$l$-structure. Let $\calX\subseteq\calA$ be a closed subvariety such that  each fiber $\calX_s$ is a $2$-dimensional integral GeM subvariety in $\calA_s$, with $\calX_s$ generating $\calA_s$ for any $s\in S(\bar\QQ)$ (see the end of \S\ref{section_notation_abelianschemes}).  Assume that the modular map $\iota_S:S\rightarrow\bA_g$ induced by the family $\calA\rightarrow S$ is quasi-finite. Then there is some $c>0$, depending on all the information above, with the following property: for any $s\in S(\bar\QQ)$ satisfying $h(\iota_S(s))>c$, and any subgroup $\Gamma\subseteq \calA_s(\bar\QQ)$ of finite rank $r$, we have
	\[
	\#(\calX_s(\bar\QQ)\cap\Gamma)\leq c^{1+r}.
	\]
\end{theorem}

The idea of our proof is as follows. We first use the non-degeneracy result by Gao \cite[Theorem 10.1]{Gao_Bettirank} and the height inequality developed by Dimitrov-Gao-Habegger \cite[Theorem 1.6]{DGH21} to study the distance between the algebraic points in $\calX(\bar\QQ)$ using N\'eron--Tate height. We find several families of curves in $\calX\rightarrow S$, over possibly different bases, together with one marked family over $S$, which satisfy the following: for any point $P$ of $\calX(\bar\QQ)$ not on the marked family, all of the points that are ``close''  to $P$ live on one single curve among these families of curves (see Lemma \ref{lemma_surface}). Then we prove and use the following Theorem \ref{curve} to bound the number of algebraic points in $\Gamma$ on those families of curves. We derive that for any rational point away from the marked family, there is a uniform bound on the number of rational points that are close to it.  Finally we combine this with a classical result of R\'emond for large points to get the desired bound on $\#(\calX_s(\bar\QQ)\cap\Gamma)$ for any $s\in S(\bar\QQ)$ of large moduli height.

\begin{theorem}\label{curve}
	Let $S$ be a variety over $\bar\QQ$ and let $\calA\rightarrow S$ be an abelian scheme. Let $\calC\subseteq \calA$ be a closed subvariety whose fiber $\calC_s$ over $S$ is a GeM curve in $\calA_s$, for any $s\in S(\bar\QQ)$. Then there exists $c>0$ which only depends on the family $\calC\subseteq\calA\rightarrow S$, such that for any $s\in S(\bar\QQ)$ and any subgroup $\Gamma\subseteq\calA_s(\bar\QQ)$ of finite rank $r$, 
	\[
	\#(\calC_s(\bar\QQ)\cap \Gamma) \leq c^{1+r}.
	\]
\end{theorem}

Theorem \ref{curve} itself is of interest to us. Compared with \cite[Theorem 1.1]{Gao_survey} or \cite[Theorem 4]{Kuhne}, it is also valid for the case when the ambient abelian varieties are not the Jacobians of the curves. Also, the curves can be singular or reducible. 

The main new ingredient in the proof of Theorem \ref{curve} is the use of Hilbert schemes to construct non-degenerate subvarieties of a family of abelian varieties which could, for example, even be a constant family of abelian varieties (see \S\ref{section_nondegeneracy}). This is motivated by the work of Hrushovski \cite[Lemma 1.3.2]{Hrushovski} and Scanlon \cite{Scanlon} on automatic uniformity, and the Betti rank formula by Gao \cite[Theorem 10.1]{Gao_Bettirank}.

% From the proof of Theorem \ref{curve}, we see that the difficulty in generalizing its proof to higher dimensions falls into two parts. One is the need for the uniform Bogomolov conjecture in higher dimensions. The other is that we are no longer able to compare the distance between points with the base height, cf. the note after its proof.

In a joint project \cite{GGK} with Gao and K\"uhne, we prove the uniform Mordell-Lang conjecture for any subvariety of an abelian variety. We generalize the uniform Bogomolov conjecture there and compare the distance of algebraic points with the moduli height of the abelian variety. The non-degeneracy construction using Hilbert schemes in \S\ref{section_nondegeneracy} is used. Compared to the current paper, some finer properties of Hilbert schemes are also required. As a particular consequence, the constant $c$ in Theorem \ref{quadratic_points} can be made independent of the degree $[F:\QQ]$.

\section{Notations}\label{section_notation}
We fix conventions and notations in this section.
\subsection{General varieties}
The algebraic closure of $\QQ$ in $\CC$ is denoted by $\bar\QQ$. We work exclusively over $\bar\QQ$ except in the proof of Theorem \ref{quadratic_points} and in the Appendix \ref{Appendix B}. Varieties are separated schemes \emph{of finite type} over $\bar\QQ$; in particular, varieties are Noetherian. Integral varieties are irreducible and reduced varieties. All subvarieties are required to be Zariski closed, unless otherwise stated. An algebraic point $x$ of a variety $X$ is a morphism $x:\Spec\bar\QQ\rightarrow X$; the set of algebraic points of $X$ is denoted by $X(\bar\QQ)$. Curves are $1$-dimensional varieties. Surfaces are $2$-dimensional varieties. In particular, curves and surfaces can have lower dimensional irreducible components.

A family of varieties is a dominant algebraic family $\calU\rightarrow S$, where $S$ is the base variety and $\calU$ is the total space which is also a variety; the word ``family'' is used here to stress that we are interested in studying the fibers. We do not require a family to be flat, although we will reduce to the flat case in the proofs. A family of curves (resp. surfaces) is usually written as $\calC\rightarrow S$ (resp. $\calX\rightarrow S$). Denote by $\calU_S^n$ the $n$-th fibered power of $\calU$ over $S$; since our fiber products for a family are always taken over the base of the family, we often leave out the subscript and write $\calU^n$ for simplicity. If $\calU\rightarrow S$ and $T\rightarrow S$ are two varieties over $S$, denote the base change $\calU\times_S T$ by $\calU_T$.  Denote the fiber of $\calU\rightarrow S$ over a point $s\in S$ by $\calU_s$.

A morphism of varieties is called quasi-finite, if every geometric fiber is finite. A morphism of varieties $f: X\rightarrow Y$ is called \emph{generically finite} (to its image) if there is a dense open subset $U$ of $X$ such that $f|_U: U\rightarrow Y$ is quasi-finite; see \cite[\href{https://stacks.math.columbia.edu/tag/073A}{Remark 073A}]{stacks-project}. In other words, we define the generically finiteness as a property local on the source. Clearly, a morphism of varieties $f:X\rightarrow Y$ with $X$ irreducible is generically finite if and only if $\dim f(X)=\dim X$. Remark also that the composition of two generically finite morphisms is not necessarily generically finite under this definition. 

A morphism of varieties $\calX\rightarrow S$ is called (resp. quasi-)projective if it factors through a (resp. locally) closed immersion $\calX\hookrightarrow \PP(\calE)$ for some coherent sheaf $\calE$ on $S$. We usually assume $S$ is quasi-projective over $\bar\QQ$; then we can indeed have a stronger projectivity with a closed immersion $\calX\hookrightarrow\PP^N_S$ for some $N$; see \cite[Summary 13.71]{GW} for a discussion of various notions of projectivity.

Let $S$ be a variety and let $\calX\rightarrow S$ be a projective morphism of varieties. There exists a fine moduli space, namely the (relative) Hilbert scheme, denoted by $\Hilb(\calX/S)$, which represents the functor that associates to any locally noetherian scheme $T$ over $S$ the set of all closed subschemes of $\calX_T$ that are flat over $T$. The Hilbert scheme $\Hilb(\calX/S)$ has a stratification by Hilbert polynomials with respect to a given relatively very ample line bundle on $\calX$ as
\[\Hilb(\calX/S)=\bigsqcup_{\Phi\in \QQ[\lambda]}\Hilb^{\Phi}(\calX/S)
\]such that each piece $\Hilb^\Phi(\calX/S)$ is a projective scheme over $S$;  cf. \cite{Gro_Hilbert} and \cite{Nitsure_hilbertscheme}.

\subsection{Abelian schemes}\label{section_notation_abelianschemes}
Let $S$ be a variety and let $\calA\rightarrow S$ be an abelian scheme of (relative) dimension $g$. By \cite[1.10(a)]{Faltings-Chai}, the abelian scheme $\calA\rightarrow S$ is projective if $S$ is a Noetherian normal scheme. For example, if $S$ is a smooth variety over $\bar\QQ$, then $\calA\rightarrow S$ is projective. 
Fix some $l\geq 3$ throughout. By level-$l$-structure on $\calA\rightarrow S$, we mean \emph{symplectic} level-$l$-structure; in characteristic zero, it is an isomorphism of $S$-group schemes $(\ZZ/l\ZZ)^{2g}\xrightarrow{\sim}\calA[l]$ which takes the standard symplectic pairing to the Weil pairing. We fix a choice of primitive $l$-th root of unity throughout.

The moduli space of principally polarized abelian varieties over $\bar\QQ$ of dimension $g$ with level-$l$-structure is representable by a quasi-projective smooth variety denoted by $\bA_g$; see \cite[Theorem 2.3.1]{Genestier-Ngo} and \cite[Theorem 7.9 and its proof]{GIT} for its existence and properties. Let $\gothA_g\rightarrow \bA_g$ be the universal abelian scheme, which is projective by the last paragraph due to the smoothness of $\bA_g$. So there exist relatively ample line bundles for $\gothA_g\rightarrow \bA_g$.

If $\calA\rightarrow S$ is principally polarized with level-$l$-structure, then there exist natural modular maps $\iota_S:S\rightarrow \bA_g$ and $\iota:\calA\rightarrow \gothA_g$ such that $\calA\rightarrow S$ is the pullback of $\gothA_g\rightarrow \bA_g$ by $\iota_S$. In this case, the abelian scheme $\calA\rightarrow S$ is projective.

Fix a relatively ample, symmetric (i.e. $[-1]^*\calL_0\cong\calL_0$) line bundle $\calL_0$ for the universal family $\gothA_g\rightarrow\bA_g$ which defines fiber-wise N\'eron--Tate heights $\hat h:\gothA_g(\bar\QQ)\rightarrow\RR_{\geq 0}$ (e.g., given a relatively ample line bundle $\calL$, we can take  $\calL_0:=\calL\otimes [-1]^*\calL$ which is relatively ample and symmetric). Fix an ample line bundle $M_0$ on a fixed projective compactification $\bar\bA_g$ of $\bA_g$, which defines a height function $h:\bA_g(\bar\QQ)\rightarrow \RR$ up to a bounded function. 

By the $n$-th Faltings--Zhang morphism $\calD_n$ for $\calA\rightarrow S$, we mean the following morphism 
\[
\begin{split}
	\calD_n:\quad\calA^{n+1}&\longrightarrow\calA^n\\
	(P_0,\dots,P_n)&\longmapsto (P_1-P_0,\dots,P_n-P_0).
\end{split}
\]

We say that an irreducible subvariety $X$ of an abelian variety $A$ generates $A$, if $X-X$ is not contained in any proper abelian subvariety of $A$. In general, we say that a (possibly reducible) subvariety $X\subseteq A$ generates $A$ if some irreducible component of $X$ generates $A$.

\section{Non-degeneracy}\label{section_nondegeneracy}
Let us start by defining the Betti maps. Let $S$ be an integral quasi-projective variety (over $\bar\QQ$ as always).  Consider an abelian scheme $\pi:\calA\rightarrow S$ and a smooth complex point $s\in S(\CC)$. One can take a local trivialization which is called a \emph{Betti map} $b_{\Delta}:\calA_\Delta\rightarrow \mathbb{T}^{2g}$  over any simply-connected analytic open neighborhood $\Delta\subseteq S^{\text{sm,an}}$ of $s$ using the period mappings, where $S^{\text{sm,an}}$ denotes the analytification of the smooth locus of $S$, $\calA_\Delta:=\pi^{-1}(\Delta)$ denotes the corresponding complex abelian scheme over $\Delta$ and $\mathbb{T}^{2g}$ is the real torus of dimension $2g$. The Betti map $b_\Delta$ is a real analytic map of analytic manifolds with complex analytic fibers. 

For an integral subvariety $\calX$ of $\calA$ dominant over $S$, restrict a Betti map to $\calX^{\text{sm,an}}\cap \calA_\Delta$. The \emph{generic Betti rank} is defined to be the maximal $\RR$-rank of $(\mathrm{d}b_\Delta|_{\calX^{\text{sm,an}}\cap \calA_\Delta})_x$ for $x\in {\calX^{\text{sm,an}}\cap \calA_\Delta}$, where $\mathrm{d}f$ is the differential for any smooth morphism $f$ between manifolds. This definition is independent of the choice of the Betti map. See  \cite[\S4]{Gao_Bettirank} and \cite[\S B.1]{DGH21} for more details of the construction and the properties.

The notion of non-degeneracy first appeared in the work of Habegger  \cite{Habegger}. Here we take the definition from \cite[Definition B.4]{DGH21}. We say an irreducible dominant-over-$S$ closed subvariety $\calX$
 is \emph{non-degenerate} if the generic Betti rank of $\calX^{\text{red}}$ is equal to $2\dim \calX$, where $\dim\calX$ is the dimension of $\calX$ as a scheme.

 In this section, we will use Gao's Betti rank formula \cite[Theorem 10.1]{Gao_Bettirank} to construct a new type of non-degenerate subvarieties. 

Let us start with the following lemma:
\begin{lemma}\label{lemma_hilb}
	Let $\calY\rightarrow B$ be a projective morphism of varieties over $\bar\QQ$. Let $H\subseteq \Hilb^\Phi(\calY/B)$ be a locally closed integral subvariety (for some Hilbert polynomial $\Phi\in \QQ[\lambda]$ with respect to some relatively very ample line bundle). Let 
	\[
	\begin{tikzcd}
		\calU\arrow[r,hook]\arrow[d] &\calY_H=\calY\times_B H\arrow[ld] \\
		H & 
	\end{tikzcd}
	\] be the flat family induced by $H\subseteq \Hilb^\Phi(\calY/B)$. Assume that its geometric generic fiber $\calU_{\bar\eta}$ is integral.
	
	Then the fibered power $\calU^n_H$ is irreducible, and for $n\gg1$, the composition map $f_n:\calU^n_H\hookrightarrow (\calY_H)^n_H\rightarrow \calY^n_B$ is generically finite. 
\end{lemma}
\begin{proof}
Since the geometric generic fiber $\calU_{\bar\eta}$ is integral, there is a dense open subset $V\subseteq H$ such that $\calU_V\rightarrow V$ has geometrically integral fibers; see \cite[\href{https://stacks.math.columbia.edu/tag/0559}{Lemma 0559}]{stacks-project} and \cite[\href{https://stacks.math.columbia.edu/tag/0578}{Lemma 0578}]{stacks-project}.

For the first part, since the morphism $\calU^n_H\rightarrow H$ is open due to flatness, with irreducible fibers over the dense subset $V$, the irreducibility of $\calU^n_H$ follows from a topological argument as in \cite[\href{https://stacks.math.columbia.edu/tag/004Z}{Lemma 004Z}]{stacks-project}.

For the second part, by definition of generically finiteness, it suffices to replace $H$ by $V$ and thus we assume without loss of generality that $\calU\rightarrow H$ has geometrically integral fibers. 

\emph{Claim: if $n$ is large, there is a closed point in the image of $f_n$ whose fiber is a singleton.}

To prove the claim, without loss of generality, assume that $B=\Spec \bar\QQ$ is a point. Let $h_0\in H(\bar\QQ)$ and denote the integral fiber over $h_0$ by $\calU_0$. Since the fibers of $\calU\rightarrow H$ are integral and have the same Hilbert polynomial and distinct moduli, we have
\begin{equation}\label{eqn_key}
	\bigcap_{P\in \calU_0(\bar\QQ)}\{h\in H(\bar\QQ):P\in \calU_h(\bar\QQ)\}=\{h_0\}.
\end{equation}

Observe that the left hand side is an intersection of Zariski closed subsets of $H$. It is indeed a finite intersection by Noetherian property. For $n\gg1$, there exist $P_1,P_2,\dots,P_n\in \calU_0(\bar\QQ)$ such that 
\[
\bigcap_{i=1}^n\{h\in H(\bar\QQ):P_i\in \calU_h(\bar\QQ)\}=\{h_0\}.
\]Then the fiber of $f_n$ over $(P_1,\dots,P_n)\in \calY^n_B(\bar\QQ)$ is a singleton. Thus the claim follows.

The generically finiteness is an immediate result of the following Chevalley's upper semicontinuity theorem on the dimension of fibers:

\begin{theorem}[{\cite[Th\'eor\`eme 13.1.3]{EGAIV}}]
	Suppose $f:X\rightarrow Y$ is a morphism of schemes that is locally of finite type. Then the function $X\rightarrow \NN$ which sends $x\in X$ to $\dim_x X_{f(x)}$, that is, the local dimension of the fiber over $f(x)$ at $x$, is upper semicontinuous.
\end{theorem}
In fact, Chevalley's theorem together with the claim implies that there is a dense open subset $W\subseteq \calU^n_H$, such that for the restriction $f_n|_W: W\rightarrow \calY^n_B$, any point $x\in W$ is isolated in the fiber $W_{f_n(x)}$. In other words, all the (geometric) fibers of $f_n|_W$ are finite. 
\end{proof}

\begin{remark}
	In the proof, the equation \eqref{eqn_key} holds true if we only assume that the fiber $\calU_0$ is reduced and the fibers have distinct moduli. However, as pointed out by the referee, we need the irreducibility of the geometric generic fiber as a sufficient condition to ensure the irreducibility of the fibered powers.
\end{remark}

The next corollary allows us to have a little more flexibility:
\begin{cor}\label{generically_finite_is_ok}
Let $\calY\rightarrow B$ be a projective morphism of varieties over $\bar\QQ$.  Let $H$ be an irreducible variety together with a generically finite morphism $H\rightarrow \Hilb(\calY/B)$. Let 
	\[
	\begin{tikzcd}
		\calU\arrow[r,hook]\arrow[d] &\calY_H=\calY\times_B H\arrow[ld] \\
		H & 
	\end{tikzcd}
	\] be the flat family induced by $H\rightarrow\Hilb(\calY/B)$. Assume that its geometric generic fiber $\calU_{\bar\eta}$ is integral. 
	
	Then the fibered power $\calU^n_H$ is irreducible, and for $n\gg1$, the composition map $f_n:\calU^n_H\hookrightarrow (\calY_H)^n_H\rightarrow \calY^n_B$ is generically finite.  
\end{cor}
\begin{proof}
The first part is the same as in the proof of Lemma \ref{lemma_hilb}. Note that the irreducibility of $H$ ensures that $H$ actually maps into $\Hilb^\Phi(\calY/B)$ for some Hilbert polynomial $\Phi$. We may assume without loss of generality that $H$ is reduced.

For the second part, let $H'$ be the scheme-theoretic image of $H$ in $\Hilb(\calY/B)$, which is the Zariski closure of the image of $H$ equipped with the reduced structure. Let $\calU'\rightarrow H'$ be the corresponding flat family. Then $\calU\rightarrow H$ is a base change of $\calU'\rightarrow H'$. Since $H\rightarrow H'$ is generically finite and dominant, the geometric generic fiber of $\calU'\rightarrow H'$ is just $\calU_{\bar\eta}$, which is integral. By Lemma \ref{lemma_hilb}, the natural map $(\calU')^n_{H'}\rightarrow \calY^n_B$ is generically finite. Then $f_n$ as the composition of $\calU^n_H\rightarrow (\calU')^n_{H'}$, which is a base change of $H\rightarrow H'$, whence generically finite, with $(\calU')^n_{H'}\rightarrow \calY^n_B$, is generically finite.
\end{proof}

\begin{prop}\label{nondegeneracy}
	Let $\gothA_g\rightarrow \bA_g$ be the universal abelian scheme defined in \S\ref{section_notation_abelianschemes}. Let $S$ be an integral quasi-projective variety over $\bar\QQ$ with a generically finite morphism $S\rightarrow \Hilb(\gothA_g/\bA_g)$, and let $\calU\rightarrow S$ be the induced family inside $\calA:=\gothA_g\times_{\bA_g}S\rightarrow S$. For the geometric generic fiber $\calU_{\bar\eta}$, assume the following:
	\begin{enumerate}
		\item $\calU_{\bar\eta}$ is integral,
		\item $\calU_{\bar\eta}$ generates $\calA_{\bar\eta}$, and
		\item $\calU_{\bar\eta}$ has finite stabilizer in $\calA_{\bar\eta}$.
	\end{enumerate}
	 Then for $n\gg1$, the fibered power $\calU^n\subseteq \calA^n$ is irreducible and non-degenerate.
\end{prop}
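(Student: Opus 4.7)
The plan is to invoke Gao's non-degeneracy criterion \cite[Theorem 10.1]{Gao_Bettirank}, which for a dominant subfamily of an abelian scheme $\calA\to S$ essentially says that non-degeneracy is equivalent to generic finiteness of the modular map to the universal family $\gothA_g$, provided the generic fiber over $S$ has finite stabilizer in $\calA_{\bar\eta}$ and is not contained in a proper subgroup scheme. Applied to $U^n\subseteq \calA^n$, this will match the conclusion of Corollary \ref{generically_finite_is_ok} once we verify that the three hypotheses transfer from $U$ to $U^n$.

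I would first check that (1)--(3) pass to the $n$-fold fiber product $U^n\to S$. Geometric integrality of $U_{\bar\eta}$, which follows since $\bar\eta$ is algebraically closed, gives integrality of $U^n_{\bar\eta}=U_{\bar\eta}^n$. The stabilizer of $U_{\bar\eta}^n$ in $\calA^n_{\bar\eta}$ coincides with $\Stab_{\calA_{\bar\eta}}(U_{\bar\eta})^n$, hence stays finite by (3). For generation, the difference set $U^n_{\bar\eta}-U^n_{\bar\eta}$ contains all tuples of the form $(0,\dots,u-u',\dots,0)$ with $u,u'\in U_{\bar\eta}$, so by (2) it generates each coordinate copy of $\calA_{\bar\eta}$ and hence all of $\calA^n_{\bar\eta}$. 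Next I would apply Corollary \ref{generically_finite_is_ok} with $Y=\gothA_g$, $B=\bA_g$, $H=S$: the generic finiteness of $S\to\Hilb(\gothA_g/\bA_g)$ together with the reducedness of $U_{\bar\eta}$ produces some $n_0$ so that, for all $n\geq n_0$, the induced morphism
\[
U^n\longrightarrow \gothA_g\times_{\bA_g}\cdots\times_{\bA_g}\gothA_g
\]
is generically finite, and in particular its image has dimension $\dim U^n$.

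Combining these two ingredients, Gao's criterion then delivers that $U^n\subseteq\calA^n$ is non-degenerate for all such $n$. The main obstacle I anticipate is the bookkeeping needed to match Gao's hypotheses to the $n$-fold situation: one must confirm that the $n$-fold fiber product of $\gothA_g$ over $\bA_g$ plays the role of the moduli target for the abelian scheme $\calA^n\to S$, and that generic finiteness of $U^n$ over this target corresponds precisely to the defect-zero condition of \cite[Theorem 10.1]{Gao_Bettirank} under the group-theoretic assumptions already established. Once this identification is pinned down, the argument is a direct assembly of Lemma \ref{lemma_hilb}, Corollary \ref{generically_finite_is_ok}, and the inherited conditions on the generic fiber of $U^n$.
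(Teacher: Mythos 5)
Your overall blueprint — Gao's Theorem 10.1 combined with Corollary \ref{generically_finite_is_ok} — is the right pairing, and it is the same pairing the paper uses. But your characterization of Theorem 10.1 as ``non-degeneracy is equivalent to generic finiteness of the modular map'' for a dominant subfamily is too strong, and the gap matters here.

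Gao's Theorem 10.1(i) is not a criterion for a single family $V\subseteq\calA\to S$; it is a formula for the generic Betti rank of the $n$-fold fiber power $V^n$, with a defect term: the Betti rank of $V^n$ is at least $2(\dim\iota(V^n)-t)$ once $n\geq\dim S-t$. Concretely, if you take $V=U^n$ as a single family over $S$ and try to conclude that $V\subseteq\calA^n$ itself is non-degenerate, you are forcing the exponent to be $1$, which would require $1\geq\dim S-t$, i.e.\ a defect of size at least $\dim S-1$; you cannot conclude full Betti rank of $V$ this way unless $\dim S\leq 1$. In other words, your plan — first bundle the exponent into the subvariety $U^n$, then apply the theorem with exponent $1$ — loses exactly the gain that the growing exponent is supposed to provide. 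The paper instead applies Theorem 10.1(i) to the original family $U\to S$: this already outputs a lower bound on the Betti rank of $U^n$ (for the same $n$), and setting $t=0$ with $n\geq\dim S$ gives Betti rank $\geq 2\dim\iota(U^n)$. Corollary \ref{generically_finite_is_ok} then supplies $\dim\iota(U^n)=\dim U^n$ for $n$ large, and non-degeneracy follows. Under this reading, your auxiliary verification that hypotheses (1)--(3) pass to $U^n$ — while correct — is superfluous: Gao's formula only requires them for $U\to S$, which is what is assumed. To repair the proposal, replace the ``criterion applied to $U^n$'' step by ``apply Theorem 10.1(i) to $U\to S$ with $t=0$ and $n\geq\max(\dim S, n_0)$'' and the argument becomes exactly the paper's.
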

\begin{proof}
	Note that $\calU\rightarrow S$ satisfies the assumptions (e.g., $\dim\calU>\dim S$ follows from (2) and $g>0$) of Gao's Betti rank formula; see \cite[Theorem 10.1(i)]{Gao_Bettirank} and the Erratum \cite{GaoErratum}. So for each $t\geq0$, 
		 the generic Betti rank of $\calU^n$ is at least $2(\dim \iota(\calU^n)-t)$ for all $n\geq \dim S-t$, where $\iota:\calU^n\rightarrow \gothA_{gn}$ is the modular map. But by Corollary \ref{generically_finite_is_ok}, if $n\geq N$ for some $N\gg1$, the map $\calU^n\rightarrow \gothA_{g}^n$ is generically finite. Since the natural map $\gothA_{g}^n\rightarrow\gothA_{gn}$ is clearly quasi-finite, it implies $\iota:\calU^n\rightarrow \gothA_g^n\rightarrow\gothA_{gn}$ is generically finite. In particular, $\dim\iota(\calU^n)=\dim \calU^n$. Therefore, taking $t=0$ and $n\geq \max\{N,\dim S\}$, we deduce that the generic Betti rank of $\calU^n$ is equal to $2\dim \calU^n$. In other words, $\calU^n\subseteq \calA^n$ is non-degenerate. 
\end{proof}

\begin{remark}
	If the modular map $S\rightarrow \bA_g$ for a principally polarized abelian scheme $\calA\rightarrow S$ with level-$l$-structure is generically finite, then for any flat algebraic family $\calU\rightarrow S$ in $\calA\rightarrow S$, the induced map $S\rightarrow \Hilb(\gothA_g/\bA_g)$ is generically finite.  
\end{remark} 

Sometimes we would also like to consider the non-degeneracy of the image of the Faltings--Zhang morphism.  Let $S$ be a variety over $\bar\QQ$ with a morphism $S\rightarrow \Hilb(\gothA_g/\bA_g)$, and let $f:\calU\rightarrow S$ be the induced family inside the abelian scheme $\calA:=\gothA_g\times_{\bA_g}S$. Consider the following embedding of $\calU\times_S \calU$ in $\calU\times_S \calA$:
\[
	\begin{split}
		\calU\times_S \calU&\longrightarrow \calU\times_S \calA\\
		(P,Q)&\longmapsto (P,Q-P).
	\end{split}
	\]
	Denote its isomorphic image by $\calU_1$ (the inverse map is given by $(P,x)\mapsto(P,P+x)$). It is equipped with a natural flat morphism to $\calU$ through the first projection $\calU_1\subseteq\calU\times_S\calA\rightarrow \calU$, whose fiber over any point $P\in \calU(\bar\QQ)$ is $\calU_{f(P)}-P$. We have the following non-degeneracy result in terms of the Faltings--Zhang morphism:

\begin{prop}\label{nondegeneracy_for_FZ}
	Let $\gothA_g\rightarrow \bA_g$ be the universal abelian scheme defined in \S\ref{section_notation_abelianschemes}. Let $S$ be an integral quasi-projective variety over $\bar\QQ$ with a morphism $S\rightarrow\Hilb(\gothA_g/\bA_g)$, and let $\calU\rightarrow S$ be the induced family inside the abelian scheme $\calA:=\gothA_g\times_{\bA_g}S\rightarrow S$. For the geometric generic fiber $\calU_{\bar\eta}$, assume the following:
	\begin{enumerate}
		\item $\calU_{\bar\eta}$ is integral,
		\item $\calU_{\bar\eta}$ generates $\calA_{\bar\eta}$, and
		\item $\calU_{\bar\eta}$ has finite stabilizer in $\calA_{\bar\eta}$.
	\end{enumerate}
	Let $\calU_1\rightarrow \calU$ be the flat family described in the previous paragraph. Assume that the induced map $\calU\rightarrow \Hilb(\gothA_g/\bA_g)$ for $\calU_1\rightarrow \calU$ is generically finite. 
	 
	 Then the image $\calD_n(\calU^{n+1})$ of the restricted $n$-th Faltings--Zhang morphism   $\calD_n:\calU^{n+1}\rightarrow  \calA^n$ is irreducible, and for $n\gg1$, the image $\calD_n(\calU^{n+1})\subseteq \calA^n$ is non-degenerate.
\end{prop}

Since there are many morphisms involved in the proof below while there is only one  morphism interesting to us between any two varieties, we decide to refer to most morphisms by the arrow.
\begin{proof}
The irreducibility follows from the irreducibility of $\calU^{n+1}$ by the integrality assumption on $S$ and $\calU_{\bar\eta}$, as in the proof of Lemma \ref{lemma_hilb}. 

	 As in Proposition \ref{nondegeneracy}, we aim to show that the moduli map $\calD_n(\calU^{n+1})\rightarrow \gothA^n_g$ is generically finite when $n$ is large.
	 
	  Write $(P,s)$ for a point of $P\in \calU_s(\bar\QQ)$ with $s\in S(\bar\QQ)$. Note that 
	\[
	\calD_n(\calU^{n+1})(\bar\QQ)=\{(s,P_1-P_0,\dots,P_n-P_0):s\in S(\bar\QQ),P_0,\dots,P_n\in \calU_s(\bar\QQ)\}.
	\]
	 Note also that
	\[
 \begin{split}
	(\calU_1)_\calU^n(\bar\QQ)&=\{(s,P_0,P_1-P_0,\dots,P_n-P_0):s\in S(\bar\QQ),P_0,\dots,P_n\in\calU_s(\bar\QQ)\}\\
 &=\{(s,P_0;P_1-P_0,\dots,P_n-P_0):(s,P_0)\in\calU(\bar\QQ),P_1,\dots,P_n\in \calU_s(\bar\QQ)\}.
 \end{split}
	\] There is a modular map $\calD_n(\calU^{n+1})\rightarrow \gothA_g^n$ induced by $\calA^n\rightarrow \gothA_g^n$ and a modular map $(\calU_1)_{\calU}^n\rightarrow\gothA_g^n$ induced by $\calA^n_\calU\rightarrow\gothA_g^n$. They are related by the map $(\calU_1)_\calU^n\rightarrow\calD_n(\calU^{n+1})$ forgetting the $P_0$-component.
	
   Note that $\calU$ is irreducible as a special case by the first paragraph. The integrality of the geometric generic fiber of $\calU_1\rightarrow \calU$ follows from the fact that $\calU_1\cong\calU\times_S\calU$ as $\calU$-schemes, where $\calU\times_S\calU$ is a scheme over $\calU$ via the first projection. If the induced map $\calU\rightarrow \Hilb(\gothA_g/\bA_g)$ for $\calU_1\rightarrow \calU$ is generically finite, then by Corollary \ref{generically_finite_is_ok}, for $n\gg1$ we know the modular map $(\calU_1)_\calU^n\rightarrow \gothA_g^n$ is generically finite. Because $(\calU_1)_\calU^n\rightarrow \gothA_g^n$ factors surjectively through the other map $\calD_n(\calU^{n+1})\rightarrow \gothA_g^n$, we have the generically finiteness of $\calD_n(\calU^{n+1})\rightarrow \gothA_g^n$ by irreducibility. Note that $\gothA_g^n\rightarrow\gothA_{gn}$ is quasi-finite. So the composition map $\iota:\calD_n(\calU^{n+1})\rightarrow\gothA_{gn}$ is generically finite.

	Applying Gao's Betti rank formula \cite[Theorem 10.1(ii)]{Gao_Bettirank} (see also \cite{GaoErratum}) with $t=0$, the generic Betti rank of $\calD_n(\calU^{n+1})$ is at least $2\dim \iota(\calD_n(\calU^{n+1}))$ for $n\geq \dim \calU$. 
	 By the previous paragraph, $\dim\iota(\calD_n(\calU^{n+1}))=\dim \calD_n(\calU^{n+1})$ if $n\gg1$. So for $n\gg1$, the subvariety $\calD_n(\calU^{n+1})\subseteq \calA^n$ is non-degenerate.
\end{proof}

\begin{remark}
	The assumption on the generically finiteness of $\calU\rightarrow \Hilb(\gothA_g/\bA_g)$ is a fairly restrictive condition.  It does not allow the family to have a positive dimensional family of translates of a same subvariety. For example, it excludes the extreme case when $\calU\rightarrow S$ is the family of translates of some fixed variety $X$ in the single abelian variety $A$. On the other hand, if we take $\calU\rightarrow S$ as the universal curve $\gothC_g\rightarrow \bM_g$ where $\bM_g$ is the fine moduli space of curves of genus $g$ with level-$l$-structure,  then since the fibers are essentially distinct, we get the non-degenerate subvariety $\calD_n(\gothC_g^{n+1})$ which is used in \cite{DGH21}.
	\end{remark}

\section{Previous Results}
\subsection{Classical results}\label{subsection_classicalresults}
In this subsection, we recall some classical results developed by R\'emond. 

Let us start by introducing two fundamental constants $c_{\mathrm{NT}}$ and $h_1$. Suppose $A$ is an abelian variety over $\bar\QQ$ with a symmetric very ample line bundle $L$. To use R\'emond's result, we assume moreover that a basis of $\mathrm{H}^0(A,L)$ gives rise to a projectively normal embedding $A\hookrightarrow \PP^n_{\bar\QQ}$ for some $n\in \NN$. This is the case if $L$ is an at least fourth power of a symmetric ample line bundle; see \cite[Theorem 9]{Mumford_quadratic}.

Let $X\subseteq A$ be an integral subvariety. Let $\deg X$ denote the degree of $X$ and let $h(X)$ denote the height of $X$, both considered as a subvariety of $\PP^n_{\bar\QQ}$.

The first constant $c_{\mathrm{NT}}$ is an upper bound for the difference between the N\'eron--Tate height $\hat h:A(\bar\QQ)\rightarrow \RR_{\geq0}$ and the naive logarithmic Weil height $h:\PP^n(\bar\QQ)\rightarrow \RR_{\geq0}$ on $A(\bar\QQ)$. Namely $c_{\mathrm{NT}}$ satisfies $|\hat h(P)-h(P)|\leq c_{\mathrm{NT}}$ for all $P\in A(\bar\QQ)$.
 
The second constant $h_1$ is a measure for the heights of the bihomogeneous polynomials that define the addition and subtraction on $A$. See \cite[\S5]{remond-inegalite} for more details.

Both constants $c_{\mathrm{NT}}$ and $h_1$ are related to $(A,L)$ and the choice of the basis of $H^0(A,L)$ for defining the embedding $A\hookrightarrow \PP^n$. In fact, it is known that  there exists some choice of a basis of $H^0(A,L)$ for the embedding $A\hookrightarrow\PP^n$ and $c'=c'(\dim A,\deg A)$:
\begin{equation}\label{bound on constants}
    c_{\mathrm{NT}},h_1\leq c' \max\{1,\hfal(A)\},
\end{equation}where $\hfal(A)$ denotes the Faltings height of $A$. More concretely, consider the universal family of abelian varieties $\gothA_g\rightarrow\bA_g$ of dimension $g$ with polarization of degree $(l/g!)^2$ and some sufficiently high level structure of level coprime to the degree of the polarization and apply the arguments in \cite[(8.4)(8.7)]{DGH21}. A standard comparison result between the Faltings height and the moduli height gives a bound for any $s\in \bA_g(\bar\QQ)$
 \begin{equation}\label{comparison of heights}
c_0'\max\{1,h(s)\}\leq \max\{1,\hfal(\gothA_{g,s})\}\leq c_0\max\{1,h(s)\}
 \end{equation}where $c_0,c_0'$ are constants only depending on our choice of height on $\bA_g$; see for example \cite[Th\'eor\`eme 1.1]{MB85} together with \cite{Faltings83} to deal with logarithmic singularities.

Write $|P|:=\hat h(P)^{1/2}$ for $P\in A(\bar\QQ)$. Let $C, X\subseteq A$ be integral GeM subvarieties of dimension one and two respectively. We record the following explicit Vojta's and Mumford's inequalities by R\'emond for dimension one and two, which are special cases of \cite[Th\'eor\`eme 1.2]{remond-inegalite} and \cite[Proposition 3.4]{remond-decompte}, respectively. Note that increasing the constants in them will only weaken the results.

	\begin{lemma}[Vojta's inequality {\cite{remond-inegalite}}] \label{Vojta's_inequality}
 \begin{enumerate}
     \item  There exists a constant $c=c(n,\deg C)>1$ such that if $P_1,P_2\in C(\bar\QQ)$ satisfy
		\[
		\begin{split}
			\langle P_1,P_2  \rangle &\geq (1-1/c)|P_1||P_2|\\
			|P_2|&\geq c|P_1|,
		\end{split}
		\]
		then $|P_1|^2\leq c\max\{1,h(C),h_1,c_{NT}\}$.
    \item  There exists a constant $c=c(n,\deg X)>1$ such that if $P_1,P_2,P_3\in X(\bar\QQ)$ satisfy
		\[
		\begin{split}
			\langle P_i,P_{i+1}  \rangle &\geq (1-1/c)|P_i||P_{i+1}|\\
			|P_{i+1}|&\geq c|P_i|
		\end{split}
		\]for $i=1,2$,
		then $|P_1|^2\leq c\max\{1,h(X),h_1,c_{NT}\}$.
 \end{enumerate}
	\end{lemma}
	
	\begin{lemma}[Mumford's inequality {\cite{remond-decompte}}]\label{Mumford's_inequality}
 \begin{enumerate}
     \item Assume that $P_1-P_2\notin \Stab(C)$. There exists a constant $c=c(n,\deg C)>1$ such that if $P_1,P_2$ satisfy
            \[
		\begin{split}
			\langle P_1,P_2  \rangle &\geq (1-1/c)|P_1||P_2|\\
			\big||P_2|-|P_1|\big| &\leq \frac{1}{c}|P_1|,
		\end{split}
		\]
		then $|P_1|^2\leq c\max\{1,h(C),h_1,c_{NT}\}$.
     \item Assume that $(P,P_1,P_2)\in X^3(\bar\QQ)$ is an isolated point in the fiber of the restricted Faltings--Zhang morphism $\calD_2:X^3\rightarrow A^2$. There exists a constant $c=c(n,\deg X)>1$ such that if $P,P_1,P_2$ satisfy
			\[
		\begin{split}
			\langle P,P_{i}  \rangle &\geq (1-1/c)|P||P_i|\\
			\big||P|-|P_i|\big| &\leq \frac{1}{c}|P|
		\end{split}
		\] for $i=1,2$,
		then $|P|^2\leq c\max\{1,h(X),h_1,c_{NT}\}$.
     
 \end{enumerate}
	\end{lemma}

  The following proposition deals with the large points for curves. Note that the first alternative trivially implies the second alternative if $C(\bar\QQ)\cap\Gamma\neq\emptyset$.

%	Let $S$ be a quasi-projective variety over $\bar\QQ$. Let $\calA\rightarrow S$ be a projective abelian scheme and let $\calC\subseteq\calA$ be an irreducible subvariety whose fibers over closed points of $S$ are integral GeM curves. Fix a relatively ample symmetric line bundle $\calL$ on $\calA\rightarrow S$, which induces fiber-wise N\'eron--Tate heights $\hat h:\calA(\bar\QQ)\rightarrow\RR_{\geq0}$. Fix a height function $h:S(\bar\QQ)\rightarrow \RR$ on $S$ corresponding to an immersion of $S$ into a projective space. Then there exists a constant $c>0$, depending on all the information above, such that for any $s\in S(\bar\QQ)$ and any subgroup $\Gamma\subseteq \calA_s(\bar\QQ)$ of finite rank $r$, we have
	% \[
	% \#\left\{P\in \calC_s(\bar \QQ)\cap\Gamma: \hat h(P)>c\max\{1,h(\calC_s),h(s)\}\right\}\leq c^{r}.
	% \]
\begin{prop}[R\'emond, curve case]\label{large_points_dim_1}
Let $C\subseteq A$ be as in the above. Write $g=\dim A, d=\deg C, l=\deg A$. There exist positive constants $c_3=c_3(g,d,l)$ and $c_4=c_4(g,d,l)$ such that for any finite rank subgroup $\Gamma\subseteq A(\bar\QQ)$ of rank $r$, either
	\[
	\#\left( C(\bar\QQ)\cap \Gamma\right)\leq c_3^{1+r},
	\]or there exists $Q_0\in C(\bar\QQ)\cap\Gamma$ such that 
	\[
	\#\{P\in C(\bar\QQ)\cap\Gamma:\hat h(P-Q_0)> c_4\max \{1,\hfal(A)\}\}\leq c_4^{1+r}.
	\]
\end{prop}
\begin{proof}
Take $c=c(n,d)=c(g,d,l)>0$ from Vojta's inequality \ref{Vojta's_inequality} and Mumford's inequality \ref{Mumford's_inequality} in the curve case. Let $c'=c'(g,l)$ be as in \eqref{bound on constants}. Enlarging $c$ to $c\cdot \max\{1,c'\}$, we can change the conclusions in both inequalities to
\begin{equation}\label{eqn_large}
    |P_1|^2\leq c\max \{1,h(C),\hfal(A)\}.
\end{equation}
By a standard packing argument (see \cite[Corollaire 6.1]{remond-decompte}), we can cover the $r$-dimensional vector space $\Gamma\otimes\RR$ by at most $(1+\sqrt{8c})^r$ cones such that for any $x,y$ in each cone, one has
\[
\langle x,y \rangle\geq (1-1/c)|x||y|.
\]Take one cone and call it $D$. Let us count the number of large points in $C(\bar\QQ)\cap D$ not satisfying \eqref{eqn_large}. Mumford's inequality says that for any $P_1,P_2\in C(\bar\QQ)\cap D$ with $P_1-P_2\notin \Stab (C), |P_1|\leq |P_2|$, we have $|P_2|>(1+1/c)|P_1|$. So the points are discrete and we can find a point $P_1\in C(\bar\QQ)\cap D$ with the smallest height.  Vojta's inequality says that all points in $C(\bar\QQ)\cap D$ satisfy $|P|<c|P_1|$. In particular, the set $C(\bar\QQ)\cap D$ is finite. Order the set by the heights $|P_1|<|P_2|<\dots<|P_N|$. Then clearly $N\leq \log_{1+1/c}(c)$. In particular, we get
\begin{equation}\label{large points with hc}
 \begin{split}
     \#\left\{P\in C(\bar\QQ)\cap\Gamma:\hat h(P)>c\max \{1,h(C),\hfal(A)\}\right\}&\leq \#\Stab(C)\cdot \log_{1+1/c}(c)\cdot (1+\sqrt{8c})^r\\
     &\leq d^2 \log_{1+1/c}(c)\cdot (1+\sqrt{8c})^r.
 \end{split}   
\end{equation}

We need the following lemma of R\'emond, which allows us to eventually remove the dependency on the height of $C$. Note that it is a special case of \cite[Lemme 3.1]{remond-decompte} in dimension one with $n=l/g!-1$.
\begin{lemma}\label{lemma_removing_hx}
	 If $\Xi\subseteq C(\bar\QQ)$ is a finite set of points of cardinality at least $ld^2/g!+1$, then 
	\[
	h(C)\leq d(l/g!+1)^2\cdot \left( \max_{P\in \Xi} \hat h(P) + c_{\mathrm{NT}} +3\log(l/g!)  \right).
	\]
\end{lemma}

For any $Q\in \Gamma$, by \eqref{large points with hc}, there is a smallest constant $\delta_Q$ such that 
\[
\#\{P\in C(\bar\QQ)\cap \Gamma: \hat h(P-Q)>\delta_Q\}\leq \log_{1+1/c}(c)\cdot (1+\sqrt{8c})^{r}.
\]This can be easily seen from the fact that $\hat h(P-Q)$ will be dominated by $\hat h(P)$ if $\hat h(P)$ is large enough. Another way to see it is by noticing the bijection
\[
\{P\in C(\bar\QQ)\cap \Gamma: \hat h(P-Q)>\delta_Q\}\leftrightarrow \{P\in \left(C(\bar\QQ)-Q\right)\cap\Gamma:\hat h(P)>\delta_Q\}.
\] In particular, we see that 
\[
\delta_Q\leq c\max\{1,h(C-Q),\hfal(A)\}.
\]Pick $Q_0$ from the finite set $C(\bar\QQ)\cap\Gamma$ such that $\delta_{Q_0}$ is the smallest among all $\{\delta_Q\}_{Q\in C(\bar\QQ)\cap\Gamma}$. We are going to show that if
$\{P\in C(\bar\QQ)\cap\Gamma:\hat h(P-Q_0)\leq \delta_{Q_0}\}$ has too many points, then Lemma \ref{lemma_removing_hx} will imply that $h(C-Q_0)$ is comparable to $\max\{1,\hfal(A)\}$.

Take $\lambda:=c\cdot d(l/g!+1)^2$. By the packing lemma \cite[Lemme 6.1]{remond-decompte}, we can cover the $r$-dimensional ball centered at $Q_0$ of radius $\sqrt{\delta_{Q_0}}$ using $(1+4\sqrt{2\lambda})^r$ small balls of radius $\sqrt{\delta_{Q_0}/2\lambda}/2$. If there is a small ball $B$ which contains at least $ld^2/g!+1$ points, then by Lemma \ref{lemma_removing_hx}, for any $Q$ in it, we have
\[
\begin{split}
    \delta_Q&\leq c\max\{1,h(C-Q),\hfal(A)\}\leq c\max\{1,\hfal(A)\}+c\cdot h(C-Q)\\&\leq [c+3d(l/g!+1)^2\log(l/g!)]\max\{1,\hfal(A)\}+\lambda \max_{P\in B}\hat h(P-Q)\\
    &\leq [c+3d(l/g!+1)^2\log(l/g!)]\max\{1,\hfal(A)\} +  \frac{1}{2}\delta_{Q_0}.
\end{split}
\]Combining it with $\delta_{Q_0}\leq\delta_Q$, we get
\[
\delta_{Q_0}\leq 2[c+3d(l/g!+1)^2\log(l/g!)]\max\{1,\hfal(A)\}.
\]Then we are done in the second alternative with 
\[
c_4=\max \left\{ 2[c+3d(l/g!+1)^2\log(l/g!)] , \log_{1+1/c}(c), 1+\sqrt{8c} \right\}.
\]
If all the balls contain at most $ld^2/g!$ points, then
\[
\#(C(\bar\QQ)\cap\Gamma)\leq \log_{1+1/c}(c)\cdot (1+\sqrt{8c})^r+ (1+4\sqrt{2\lambda})^r\cdot (ld^2/g!)\leq c_3^{1+r}
\]for some $c_3=c_3(g,d,l)$ large enough.

\end{proof}
		%   By the formulation of \cite[Corollary 2.3]{DGH19}, for any $\calC_s\subseteq \calA_s$, there exists $c_1=c_1(n,\deg \calC_s)>1$ such that
	% \[
	% \#\left\{P\in\calC_s(\bar\QQ)\cap\Gamma:\hat h(P)>c_1\max\left\{1,h(\calC_s),h_{1,s},c_{\text{NT},s}\right\}\right\}\leq c_1^r.
	% \]
	% Since for the family $\calC\rightarrow S$, the degree $\deg \calC_s$ is bounded, we can take $c_1$ uniformly for all $s\in S(\bar\QQ)$. 
	
	% By the arithmetic B\'ezout theorem \cite[Th\'eor\`eme 3, Proposition 1]{Philippon} and \cite[Proof of Proposition 8.1, Theorem A.1]{DGH21}, respectively, we have 
	% \[
	% h(\calC_s),h_{1,s},c_{\text{NT},s}\leq c_2\max\{1,h(s)\}
	% \]
	%  for some $c_2>1$, depending on everything but not on $s\in S(\bar\QQ)$. The proposition follows by taking $c:=c_1c_2$.

\subsection{Recent results}
The first theorem we recall is the height inequality for a non-degenerate subvariety, as proved by Dimitrov--Gao--Habegger in \cite[Theorem 1.6 and Appendix B]{DGH21}.
\begin{theorem}[Height inequality {\cite{DGH21}}]\label{height_inequality}
	Let $S$ be an irreducible quasi-projective variety over $\bar\QQ$ and let $\calA\rightarrow S$ be a projective abelian scheme. Fix a relatively ample symmetric line bundle $\calL$ on $\calA\rightarrow S$, which induces fiber-wise N\'eron--Tate heights $\hat h:\calA(\bar\QQ)\rightarrow\RR_{\geq0}$. Fix any height function $h:S(\bar\QQ)\rightarrow \RR$ restricted from a compactification $\bar S$ of $S$ associated to a not necessarily ample line bundle on $\bar S$. Let $\calX$ be an irreducible closed subvariety of $\calA$ that dominates $S$. Suppose $\calX$ is non-degenerate. Then there exist constants $c_1>0,c_2\geq 0$ and a Zariski dense open subset $\calU$ of $\calX$, depending on all the information above, such that
	\[
	\hat h(P)\geq c_1 h(s)-c_2\qquad\text{for all }P\in \calU_s(\bar\QQ).
	\]
\end{theorem}

Note that we allow some extra flexibility of the height function on $S$ in the statement. This easily follows from the original statement since any height function is bounded above by a constant multiple of an ample height function.

The next result is the uniform Bogomolov conjecture for curves. The case of curves embedded in their Jacobians is proved by K\"uhne \cite[Proposition 22]{Kuhne}. To deal with more general curves inside abelian varieties, we prove the following version using the non-degeneracy result in \S\ref{section_nondegeneracy}. A more detailed proof is given in Appendix \ref{Appendix B}.
 
\begin{theorem}[Uniform Bogomolov conjecture {\cite{Kuhne}}]\label{Kuhne_Bogomolov}
	Let $S$ be an irreducible variety over $\bar\QQ$ and let $\calA\rightarrow S$ be a principally polarized abelian scheme with level-$l$-structure. Let $\calC\subseteq\calA$ be an integral closed subvariety whose fibers over the closed points of $S$ are GeM integral curves. Assume moreover that for any $s\in S(\bar\QQ)$, the curve $\calC_s$ generates $\calA_s$.
	Then, there exist constants $c_1,c_2>0$, depending on all the information above, such that for any $s\in S(\bar\QQ)$, we have
	\[
	\#\{x\in\calC_s(\bar\QQ):\hat h(x)\leq c_1\}< c_2,
	\]
	where the fiber-wise N\'eron--Tate heights $\hat h$ are induced from those on the universal family $\gothA_g\rightarrow\bA_g$, as fixed in \S\ref{section_notation}.
\end{theorem}

\begin{proof}
	We explain here the new input to modify the proof in \cite[Proposition 8.3]{Gao_survey}. We may assume that $S$ is reduced, since pulling back the families to $S_{\text{red}}$ does not change the statement.

	Induct on $\dim S$. The case $\dim S=0$ is the classical Bogomolov conjecture.
	
	We may assume $S$ is quasi-projective and smooth and $\calC\rightarrow S$ is flat. So we have an induced morphism $\phi:S\rightarrow\Hilb(\gothA_g/\bA_g)$. We may replace $S$ by its image without loss of generality, since our N\'eron--Tate heights are taken from the universal family. So we assume $\phi$ is generically finite. 
	
	To prove the results on $\calC\rightarrow S$, we remark that the rest of the proof in \cite[Theorem 8.3]{Gao_survey} is valid except that the nondegeneracy of $\calC^n$ is provided by Proposition \ref{nondegeneracy}. Indeed, the geometric generic fiber $\calC_{\bar\eta}$ is integral since the geometrically integral locus is constructible by \cite[Th\'eor\`eme 9.7.7]{EGAIV} and contains all the closed points of $S$. Also, the fact that $\calC_{\bar\eta}$ has a finite stabilizer in $\calA_{\bar\eta}$ and generates $\calA_{\bar\eta}$ follows from a standard spreading-out argument.

\end{proof}

\section{Proof of Theorem {\ref{curve}}}

\begin{proof}[Proof of Theorem {\ref{curve}}]
Induct on $\dim S$. The case $\dim S=0$ is a classical result of R\'emond \cite[Th\'eor\`eme 1.2]{remond-decompte}. Consider $\dim S>0$.

We arrange the proof in a few steps. Remark that although in the theorem we allow the fibers $\calC_s$ to be not of pure dimension, the argument below first reduces to the case where fibers are integral and then ignores the case where fibers are points, since they are trivial.

\boxed{\text{Step 0}} We make several reductions in this step. 

Without loss of generality, assume $\calC$ and $S$ are integral. Note first the following simple fact by using the induction hypothesis and Chevalley's theorem on constructibility of the image:

\emph{If $S'\rightarrow S$ is a dominant map, then the result on $\calC_{S'}\subseteq \calA_{S'}$} implies the result on $\calC\subseteq \calA$.

Immediate consequences of the fact are that we may assume $S$ is \emph{quasi-projective} or even affine, and assume that the map $\calC\rightarrow S$ is \emph{flat}.

We will first add polarization and level structure to the abelian scheme. After possibly replacing $S$ by a finite cover, let $(A',L')$ be a principally polarized abelian variety isogenous to the generic abelian variety $\calA_\eta$. By spreading out $(A',L')$ and the morphism, there is a principally polarized abelian scheme $(\calA',\calL')$ over a dense open subset $S'$ of $S$ with an $S'$-isogeny $\calA'\rightarrow \calA\times_S S'$. Let $\calC'$ be the pullback of $\calC$ to $\calA'$. Then the result on $\calC'\subseteq\calA'$ would imply the result on $\calC\subseteq\calA$ over $S'$. In particular, we may assume \emph{$\calA\rightarrow S$ is a principally polarized abelian scheme}.

For the level structure, note that there is a quasi-finite dominant map $S'\rightarrow S$ such that the $l$-torsion points of the generic abelian variety $\calA_\eta$ are all defined over $k(S')$ and the torsion points spread out to distinct torsion sections over $S'$. Then we may impose the level-$l$-structure on $\calA\times_S S'$. By the fact, we thus assume \emph{$\calA\rightarrow S$ is a principally polarized abelian scheme with level-$l$-structure}.

 The generic fiber $\calC_\eta$ is irreducible, but may not be geometrically irreducible. There is a finite extension of the function field $k(S)$, which gives rise to a quasi-finite dominant map $T\rightarrow S$ for some integral variety $T$, such that the generic fiber of the pullback $\calC_T\rightarrow T$ decomposes into geometrically irreducible components. By the fact, we may work on $\calC_T\rightarrow T$ instead. Then we can work on each irreducible component of $\calC_T$. Let $\calC_1\subseteq\calC_T$ be an irreducible component. It is dominant over $T$. Then the generic fiber of $\calC_1$ is an irreducible component of the generic fiber of $\calC_T$. So the generic fiber of $\calC_1\rightarrow T$ is geometrically irreducible. By \cite[\href{https://stacks.math.columbia.edu/tag/0559}{Lemma 0559}]{stacks-project}, there exists a dense open subset $T'$ of $T$ such that $\calC_1\times_T T'\rightarrow T'$ has geometrically irreducible fibers. Together with induction hypothesis, we reduce to study such a family as $\calC_1\times_T T'\rightarrow T'$ and may assume \emph{every fiber is geometrically irreducible}. 
 
 Remark that the quasi-projectivity of $S$, integrality of $\calC,S$ and flatness can always be re-obtained by further trimming the family (without affecting the geometrically irreducibility of fibers). In fact, in all the remaining modifications, individual fibers will remain untouched up to translation.

 The generic fiber $\calC_\eta$ is clearly reduced (and hence geometrically reduced since we are in characteristic zero), since we assumed $\calC$ is reduced and $\calC_\eta$ is a localization. Then \cite[\href{https://stacks.math.columbia.edu/tag/0578}{Lemma 0578}]{stacks-project} tells us that over a dense open $V$ of $S$, all the fibers are reduced. So we may assume \emph{every fiber is geometrically reduced}.

We can assume \emph{the curves pass through $0$}. Indeed, by possibly taking a finite cover, which is justified by the fact above, we can assume there is a point $\Spec k(S)\rightarrow \calC_\eta$ on the generic fiber. By spreading out the morphism, there is a dense open subset $S'$ of $S$ such that $S'\rightarrow \calC':=\calC\times_S S'$ is a section. By the fact, it suffices to work on $\calC'\rightarrow S'$. Translating $\calC'$ by the inverse of the section, we get $\calC''\rightarrow S'$. The result on $\calC''\rightarrow S'$ implies the result on $\calC'\rightarrow S'$, since $\calC'_s(\bar\QQ)\cap\Gamma\hookrightarrow (\calC'_s(\bar\QQ)-P)\cap \langle\Gamma, P\rangle$, where $P\in \calC_s'(\bar\QQ)$, $\Gamma\subseteq \calA_s(\bar\QQ)$ and $\langle\Gamma, P\rangle$ is the group generated by $\Gamma$ and $P$, which has rank at most $1+\rank\Gamma$.

We may assume \emph{the curves generate the ambient abelian varieties}. Indeed, if the geometric generic curve $\calC_{\bar\eta}$ generates $\calA_{\bar\eta}$, then the addition map $+:(\calC-\calC)^g\rightarrow \calA$ is surjective over the geometric generic point of $S$. By properness, $+$ is surjective. Namely, the curves over $s\in S$ generate the ambient abelian varieties. Otherwise, by our assumption that the fibers of $\calC\rightarrow S$ are geometrically irreducible, the fibers of $(\calC-\calC)^g\rightarrow S$ are also geometrically irreducible. Then the image of the addition $+:(\calC-\calC)^g\rightarrow \calA$ is a closed subscheme $\calB$ of $\calA$, which is closed under group operations. So the geometric generic fiber of $\calB\rightarrow S$ is a proper connected algebraic subgroup of $\calA_{\bar\eta}$, hence an abelian subvariety. By spreading out again and discarding a lower dimensional closed subset of $S$, we may assume $\calB\rightarrow S$ is an abelian subscheme. Assume we have the result for $\calC\subseteq \calB\rightarrow S$, namely there is $c>0$ such that for any $s\in S(\bar\QQ)$ and any subgroup $\Gamma$ of $\calB_s(\bar\QQ)$ of finite rank $r$, we have $\#(\calC_s(\bar\QQ)\cap \Gamma)\leq c^{1+r}$. Then the result for $\calC\subseteq\calA\rightarrow S$ will be an immediate corollary because for any finite rank subgroup $\Gamma\subseteq\calA_s(\bar\QQ)$, the intersection $\Gamma\cap\calB_s(\bar\QQ)$ is a subgroup of possibly lower rank, which means the same $c>0$ would work for $\calC\subseteq \calA$.

Overall, we make the following assumptions:
\begin{enumerate}[topsep=0pt]
	\item $S$ is an integral quasi-projective variety over $\bar\QQ$;
	\item $\calA\rightarrow S$ is a principally polarized abelian scheme with level-$l$-structure;
	\item $\calC\rightarrow S$ is a flat family of GeM integral curves passing through $0$ in $\calA\rightarrow S$, such that every curve $\calC_s$ generates $\calA_s$.
\end{enumerate}
%an immersion of $S$ into a projective space defining a height $h:S(\bar\QQ)\rightarrow \RR$ on $S$ by (1),
In particular, there is a natural modular map $\iota_S$ from $S$ to the moduli space $\bA_g$ of principally polarized abelian varieties with level-$l$-structure by (2), and a natural modular map $\psi$ from $S$ to the Hilbert scheme $\Hilb(\gothA_g/\bA_g)$ by (3). The map $\iota_S$ induces fiber-wise N\'eron--Tate heights on $\calA\rightarrow S$ by pulling back those on the universal family. Recall that we defined the height function $h:\bA_g(\bar\QQ)\rightarrow \RR$ in \eqref{section_notation_abelianschemes}.  By abuse of notation, we still use $h$ to denote the height
function on $S(\bar\QQ)$ given by $h(s):=h(\iota_S(s))$.

\boxed{\text{Step 1}}  In this step, we will demonstrate how to apply the height inequality \ref{height_inequality} to the non-degeneracy result from \S\ref{section_nondegeneracy} to study the distance of algebraic points on the curves. 

If $\psi: S\rightarrow \Hilb(\gothA_g/\bA_g)$ is not generically finite, then the image has lower dimension. In this case we can simply use induction on the image family and pull it back. So we can and do assume $\psi: S\rightarrow \Hilb(\gothA_g/\bA_g)$ is generically finite.

Consider the following embedding of $\calC\times_S\calC$:
	\[
	\begin{split}
		\calC\times_S\calC&\longrightarrow \calA_\calC=\calC\times_S\calA\\
		(P,Q)&\longmapsto (P,Q-P).
	\end{split}
	\]
	Denote its image by $\calC_1$. Then the projection onto the first factor is a flat family of curves $\calC_1\rightarrow \calC$ inside $\calA_{\calC}$. Note that each fiber of $\calC\rightarrow S$ is a fiber of $\calC_1\rightarrow \calC$, due to our assumption that $0\in \calC_s(\bar\QQ)$. By the universal property of Hilbert schemes, there is a natural modular map $\phi:\calC\rightarrow\Hilb(\gothA_g/\bA_g)$. Then $\psi(S)\subseteq\phi(\calC)$. Depending on whether $\phi$ is generically finite or not, we apply Theorem \ref{height_inequality} in two ways. 
	
	If $\phi$ is generically finite, we use the Faltings--Zhang morphism to study the distance. By Proposition \ref{nondegeneracy_for_FZ}, for $n\gg1$, the image of the Faltings--Zhang morphism $\calD_n(\calC^{n+1})\subseteq\calA^n$ is non-degenerate. Fix such an $n$. By Theorem \ref{height_inequality}, there exist $c_1,c_2>0$ (with an extra dependence on $n$) and a dense open subset $V$ of $\calD_n(\calC^{n+1})\subseteq\calA^n$ such that
	\begin{equation}\label{height:00}
		\hat h(P_1-P_0)+\dots+\hat h(P_n-P_0)\geq n\cdot(c_1 h(s)-c_2)
	\end{equation}
	for any $s\in S(\bar\QQ)$ and $(P_1-P_0,\dots,P_n-P_0)\in V_s(\bar\QQ)$.
		
	If $\phi$ is not generically finite, then the closure of the scheme-theoretic images of $\psi:S\rightarrow \Hilb(\gothA_g/\bA_g)$ and $\phi:\calC\rightarrow \Hilb(\gothA_g/\bA_g)$ must coincide, since $\psi(S)\subseteq\phi(\calC)$ and they are both integral of the same dimension. By Proposition \ref{nondegeneracy}, for $n\gg1$, the fibered power $\calC^n\subseteq \calA^n$ is non-degenerate. Fix such an $n$. By Theorem \ref{height_inequality}, there exist $c_1,c_2>0$ (with an extra dependence on $n$) and a dense open subset $V'\subseteq\calC^n$ such that
	\begin{equation}\label{height:0}
		\hat h(P_1)+\dots+\hat h(P_n)\geq n\cdot (c_1 h(s)-c_2)
	\end{equation}
	for any $s\in S(\bar\QQ)$ and $(P_1,\dots,P_n)\in V_s'(\bar\QQ)$.

	\boxed{\text{Step 2}} For the rest, let us say two points $P_1,P_2\in \calA_{s}(\bar\QQ)$ with $s\in S(\bar\QQ)$ are \emph{close} if $\hat h(P_1-P_2)<c_1h(s)-c_2$. 
	
	Define $E_n:=\{(P,Q_1,\dots,Q_n)\in \calC^{n+1}(\bar\QQ): Q_i\text{ is close to }P\text{ for any }i\}$ and let $F_n$ be the Zariski closure of $E_n$ in $\calC^{n+1}$. Claim: $F_n$ is a proper subset of $\calC^{n+1}$. 
	
	Indeed, for the first case where $\phi$ is generically finite, simply notice that the image of a tuple $(P,Q_1,\dots,Q_n)\in E_n$ under $\calD_n$ is not in $V$ by \eqref{height:00}. 
	
	 For the other case, since up to a finite base change and away from a proper Zariski closed subset, $\calC_1\rightarrow \calC$ is a pullback of $\calC\rightarrow S$ (the morphism $\calC\rightarrow S$ inducing the pullback does not have to be the same as the structural morphism $\calC \rightarrow S$), the open subset $V'\subseteq\calC^n$ pulls back to an open dense subset $V''$ of $(\calC_1)_\calC^n$ with the same height inequality as in (\ref{height:0}) because of our choice of the height function on $S$ at the end of Step $0$.  Namely, we have
	\begin{equation}
		\hat h(P_1)+\dots+\hat h(P_n)\geq n\cdot(c_1 h(s)-c_2)
	\end{equation} for any $s\in S(\bar\QQ)$, $P\in \calC_s(\bar\QQ)$ and $(P_1,\dots,P_n)\in V_{s,P}''(\bar\QQ)$. Notice that a point $(P_1,\dots,P_n)\in (\calC_1)_{\calC}^n(\bar\QQ)$ over $P\in \calC_s(\bar\QQ)$ is of the form $(Q_1-P,\dots,Q_n-P)$ for some $(Q_1,\dots,Q_n)\in \calC^n_s(\bar\QQ)$. Then it is clear that $E_n$ is not Zariski dense in $\calC^{n+1}$ through the following identification
	\[
	\begin{split}
		\calC^{n+1}&\longrightarrow (\calC_1)_\calC^n\\
		(s;P,Q_1,\dots,Q_n)&\longmapsto (s,P;Q_1-P,\dots,Q_n-P),
	\end{split}
	\]since a point of $E_n$ corresponds to a point outside $V''$. 
	
	 Let $V_n:=\calC^{n+1}\setminus F_n$. The image of $V_n$ in $S$, say $S_0$, is Zariski open by flatness. We can use induction hypothesis on $S\setminus S_0$. So without loss of generality, assume the projection $V_n\rightarrow S$ is surjective. Then we have the following lemma:

	\begin{lemma}\label{Lemma_curve}
		There exist $N_1,N_2>0$ such that for any $s\in S(\bar\QQ)$, we have
		\[
		\#\{P\in\calC_s(\bar\QQ):\text{there are }>N_1\text{ points close to }P\}\leq N_2
		\]
	\end{lemma}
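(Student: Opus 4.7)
The plan is to exploit the algebraic structure of the proper closed subvariety $F_n\subsetneq \calC^{n+1}_S$ whose complement $V_n$ surjects onto $S$. For $n$ fixed as in Step 1, $V_{n,s}=\calC_s^{n+1}\setminus F_{n,s}$ is nonempty for every $s\in S(\bar\QQ)$, so $F_{n,s}$ is a proper closed subset of the irreducible $\calC_s^{n+1}$. By the very definition of $E_n$ together with $F_n=\overline{E_n}$, any point $P\in\calC_s(\bar\QQ)$ with close-neighbor set $C(P)\subseteq\calC_s(\bar\QQ)$ tautologically satisfies $C(P)^n\subseteq (F_n)_P$, where $(F_n)_P$ is the fiber over $P$ of the first-factor projection $\pi_1:F_n\to\calC$. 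The proof will split points $P\in\calC_s(\bar\QQ)$ into two classes according to the dimension of $(F_n)_P$.

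First I would handle the ``big-fiber'' class. Set
\[
\tilde F_n:=\{P\in\calC : \dim_P (F_n)_P = n\}\subseteq\calC,
\]
which is closed by upper-semicontinuity of fiber dimension. Since $F_{n,s}\subsetneq\calC_s^{n+1}$ and $\calC_s^{n+1}$ is irreducible, $\tilde F_{n,s}$ is a proper closed subset of the curve $\calC_s$, hence finite. After applying generic flatness to $\tilde F_n\to S$ and invoking the ambient induction on $\dim S$ already in use, we may shrink $S$ to a dense open over which $|\tilde F_{n,s}|$ is uniformly bounded, and the removed locus is absorbed by the induction; this uniform bound will serve as $N_2$.

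Next, for $P\in\calC_s(\bar\QQ)\setminus\tilde F_{n,s}$, the fiber $(F_n)_P$ is a proper closed subset of $\calC_s^n$ of dimension at most $n-1$ and, after another generic flatness reduction, of uniformly bounded degree $D$. The constant $N_1$ will come from the following algebraic claim, proven by induction on $n$: \emph{if $Y$ is a projective curve with a fixed embedding and $X\subsetneq Y^n$ is a proper closed subvariety of degree at most $D$, then any finite subset $A\subseteq Y$ with $A^n\subseteq X$ satisfies $|A|\leq B(D,n)$ for some explicit bound $B$.} For the inductive step, project $X$ onto the last factor: if the image is not all of $Y$ then $A$ lies in that finite image of size at most $D$; otherwise the locus in $Y$ where the fiber dimension of $X\to Y$ is at least $n-1$ is a proper closed subset of bounded degree, and for any $a\in A$ outside this locus the slice $X_a\subsetneq Y^{n-1}$ has dimension at most $n-2$, degree bounded via B\'ezout, and contains $A^{n-1}$, so the inductive hypothesis applies. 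Combining the two classes gives $\{P\in\calC_s(\bar\QQ):|C(P)|>N_1\}\subseteq \tilde F_{n,s}$, which has cardinality at most $N_2$.

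The main obstacle is securing the uniform bound $D$ on degrees of fibers of $F_n\to S$ (and the cardinality bound for $\tilde F_n\to S$), since neither morphism is automatically flat; this is resolved systematically by generic flatness combined with the ongoing induction on $\dim S$ invoked throughout the proof of Theorem \ref{curve}, which lets us pass to a dense open of $S$ where the relevant Hilbert polynomials are constant. A minor conceptual point worth flagging is that ``close'' is a transcendental (height) condition rather than an algebraic one, but we never need more than the tautological containment $E_n\subseteq F_n$, so the algebraic reductions above go through unchanged.
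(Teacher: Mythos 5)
Your argument is correct and essentially reproduces the paper's proof: your $\tilde F_n$ is precisely the paper's $F_0$ (the locus in $\calC$ over which all $n$-fold fibers of $F_n$ are the full $\calC_s^n$), and your ``algebraic claim'' is the paper's descending tower $F_n, F_{n-1}, \dots, F_0$ (defined via the locus where forgetting a coordinate has positive-dimensional fibers) recast as a standalone induction on a fixed $X \subsetneq Y^n$. The only organizational difference is that the paper bounds fiber cardinalities within the family to get $N_1 = \max_i d_i$, while you first extract $(F_n)_P$ and run the dimension-dropping induction fiberwise, which forces the extra generic-flatness/B\'ezout bookkeeping you flag to keep the degree bound $D$ uniform in $s$.
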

	\begin{proof}
	This is essentially the same as \cite[Proposition 7.1]{DGH21}. Here we give an alternative proof using an idea from the proof of \cite[Lemma 1.1]{CHM}.
	
	Define $F_i\subseteq \calC^{i+1}$ inductively in a decreasing order for $0\leq i\leq n$ as follows. The first case $F_n\subseteq\calC^{n+1}$ is defined above the lemma. Assume now that $F_{i+1}\subseteq \calC^{i+2}$ is defined, for $0\leq i \leq n-1$. Let $F_i$ be the largest closed subset of $\calC^{i+1}$ such that $
	\pi_{i+1}^{-1}(F_i)\subseteq F_{i+1}$ where $\pi_{i+1}:\calC^{i+2}\rightarrow \calC^{i+1}$ is the projection leaving out the last component. In other words, $F_i$ is the set of points over which fibers of $\pi_{i+1}|_{F_{i+1}}$ are positive dimensional. It is closed due to upper semi-continuity of fiber dimension  \cite[Corollaire 13.1.5]{EGAIV} and properness of the map $F_{i+1}\rightarrow \calC^{i+1}$. 
	
	For $0\leq i\leq n-1$, let $V_i:=\calC^{i+1}\setminus F_i$. Then $\pi_{i+1}|_{F_{i+1}}$ is finite over $V_i$ and let $d_i$ be an upper bound on the cardinality of the finite fibers. Let $N_1$ be the maximum of all $d_i$'s.
	
	Since $V_n$ surjects to $S$ by assumption above this lemma, $F_i$ is proper in $\calC^{i+1}$, even fiber-wise over any $s\in S$, for any $0\leq i\leq n-1$. In particular, $F_0\subseteq \calC$ maps finitely to $S$. Let $N_2$ be the largest number of points in any fiber of $F_0\rightarrow S$.
	
	Then $N_1,N_2$ satisfy the requirement. Indeed, we will show that if $P\in (V_0)_s(\bar\QQ)$, then there are at most $N_1$ points close to $P$. Let $j(P)\geq 1$ be the smallest integer $j$ such that all points $(P,Q_1,\dots,Q_j)\in\calC^{j+1}_s(\bar\QQ)$ with all $Q_i$'s close to $P$ are contained in $F_j$. Note that $j(P)\leq n$. By definition of $j(P)$, and since $P\in V_0(\bar\QQ)$, there is a point $(P,Q_1,\dots,Q_{j(P)-1})$ with all $Q_i$'s close to $P$ in $V_{j(P)-1}$. But any prolongation $(P,Q_1,\dots,Q_{j(P)-1},Q)$ with $Q$ close to $P$ is in $F_{j(P)}$. Since the fiber of $F_{j(P)}$ over a point of $V_{j(P)-1}$ is finite of cardinality bounded by $N_1$, the number of such $Q$'s that are close to $P$ is bounded by $N_1$. 
	\end{proof}
	
	\boxed{\text{Step 3}}
	Let $\Gamma\subseteq\calA_s(\bar\QQ)$ be a subgroup of finite rank $r$ with $s\in S(\bar\QQ)$. We divide the final step into two cases, depending on whether $h(s)$ is large or small. 
 
 We assume that the line bundle $\calL_0$ is relatively \emph{very ample} and defines a fiber-wise projectively normal embedding $\calA\hookrightarrow\PP^n_S$ for some $n$. The general case with $\calL$ merely relatively ample (and symmetric) follows easily by comparing the heights, since $\calL^{\otimes 4}$ satisfies the required condition by \cite[Theorem 9]{Mumford_quadratic} and notice that $H^0(\calA,\calL)\otimes_{\calO_S(S)} k(s)\cong H^0(\calA_s,\calL_s)$ (see for example \cite[\S5, Corollary 2]{Mumford_AV} and \cite[Proposition 6.13]{GIT}), together with the criterion of projective normality in \cite[Exercise II.5.14(d)]{Hartshorne}. By Proposition \ref{large_points_dim_1} (noticing that $g,d,l$ are bounded in the family), there exist $c_3,c_4>0$, such that either already we have $\#(\calC_s(\bar\QQ)\cap\Gamma)\leq c_3^{1+r}$ and we are done; or there is a $Q_0\in \calC_s(\bar\QQ)$ such that the number of points in $\calC_s(\bar\QQ)\cap\Gamma$ whose distances to $Q_0$ are greater than $\sqrt{c_4\max\{1,\hfal(\calA_s)\}}$ is at most $c_4^{1+r}$.  By the comparison \eqref{comparison of heights}, it suffices to bound the cardinality of
 \[
\left\{P\in \calC_s(\bar\QQ)\cap \Gamma: \hat h(P-Q_0)\leq c_0c_4\max\{1,h(s)\}\right\}.
 \]
	
	\boxed{\text{case a}}	For $s\in S(\bar\QQ)$ such that $h(s)> \max\{1,\frac{2c_2}{c_1}\}$, we use the above to get the desired bound on $\#(\calC_s(\bar\QQ)\cap\Gamma)$. Note first that if two points $P,Q$ in $\calC_s$ satisfies $\hat h(P-Q)\leq\frac{c_1}{2}h(s)$, then 
	\[
	\hat h(P-Q)\leq c_1 h(s)-\frac{c_1}{2}h(s)< c_1 h(s)-c_2.
	\]In other words, $P,Q$ are close.    Then it suffices to bound the number of points $P\in(\calC_s(\bar\QQ)-Q_0)\cap\Gamma$ with
 $\hat h(P)\leq c_0\cdot c_4\max \{1,h(s)\}=c_0 c_4 h(s)$.
 
 Regard 	$\Gamma\otimes \RR$ as a normed vector space with norm given by $|\cdot|:=\hat h^{1/2}$. Let $R_1:=\sqrt{c_0c_4h(s)}$ and $R_2:=\sqrt{\frac{c_1}{2}h(s)}/2$. Assume without loss of generality that $R_1\geq R_2$. The closed ball of radius $R_1$ centered at $Q_0$ is covered by at most $(1+2R_1/R_2)^r=(1+4\sqrt{2c_0c_4/c_1})^r$ closed balls of radius $R_2$ by \cite[Lemme 6.1]{remond-decompte}. In any closed ball of radius $R_2$, any two points are close. So by Lemma \ref{Lemma_curve}, there are at most $N_1$ points in one closed ball of radius $R_2$, with at most $N_2$ exceptional points in total which we may exclude first. Thus overall we have
			\[
			\#(\calC_s(\bar\QQ)\cap\Gamma)\leq \max \left\{c_3^{1+r}, \, c_4^{1+r}+N_1\cdot (1+4\sqrt{2c_0c_4/c_1})^r+N_2 \right\}\leq c^{1+r}
			\]for some large $c>0$.

	\boxed{\text{case b}}
		For $s\in S(\bar\QQ)$ such that $h(s)\leq \max\{1,\frac{2c_2}{c_1}\}$, we use K\"uhne's result to get the desired bound on $\#(\calC_s(\bar\QQ)\cap\Gamma)$. Write 
  \[
  c_5:=c_0c_4\max\left\{1,\frac{2c_2}{c_1}\right\}\geq c_0c_4\max\{1,h(s)\}.
  \] It suffices to bound
   \[
\left\{P\in \calC_s(\bar\QQ)\cap \Gamma: \hat h(P-Q_0)\leq c_5\right\}.
 \]We use Theorem \ref{Kuhne_Bogomolov} for the family $\calC_1\rightarrow\calC$. Note that $\calC_1$ is reduced since $\calC$ is reduced and all fibers are reduced by our reduction step; see \cite[\href{https://stacks.math.columbia.edu/tag/0C21}{Lemma 0C21}]{stacks-project}. Then there exist $c_6,c_7>0$ such that for any $s\in S(\bar\QQ)$ and $P\in \calC_s(\bar\QQ)$,
		\[
		\#\{Q\in\calC_s(\bar\QQ):\hat h(Q-P)\leq c_6\}<c_7.
		\]
		This immediately tells us that a closed ball of radius $\sqrt{c_6}/2$ contains less than $c_7$ points. Now we cover the large ball of radius $\sqrt{c_5}$ centered at $Q_0$ in $\Gamma\otimes \RR$ by at most $(1+4\sqrt{c_5/c_6})^r$ small balls of radius $\sqrt{c_6}/2$ (assuming without loss of generality $2\sqrt{c_5/c_6}\geq 1$). A similar argument as above then yields
		\[
		\#(\calC_s(\bar\QQ)\cap\Gamma)\leq \max\left\{c_3^{1+r}, c_4^{1+r}+c_7\cdot (1+4\sqrt{c_5/c_6})^r\right\}\leq c^{1+r}
		\]for some large $c>0$.

\end{proof}

\section{Proof of Theorem {\ref{mainthm}}}
We will first prove the following family version of R\'emond's result on large points in dimension $2$, for which we need to invoke Theorem \ref{curve}. This is expected to some extent, as R\'emond in his original proof of \cite[Proposition 3.3]{remond-decompte}, uses the most general results for lower dimensions too.

\begin{prop}[R\'emond, surface case]\label{large_points_dim_2}
		Let $S$ be a quasi-projective variety over $\bar\QQ$. Let $\calA\rightarrow S$ be a projective abelian scheme and let $\calX\subseteq\calA$ be an irreducible closed subvariety whose fibers over the closed points of $S$ are GeM integral surfaces. Fix a relatively ample symmetric line bundle $\calL$ on $\calA\rightarrow S$, which induces fiber-wise N\'eron--Tate heights $\hat h:\calA(\bar\QQ)\rightarrow\RR_{\geq0}$. Fix a height function $h:S(\bar\QQ)\rightarrow \RR$ on $S$ corresponding to an immersion of $S$ into a projective space. Then there exists a constant $c>0$ such that for any $s\in S(\bar\QQ)$ and any subgroup $\Gamma\subseteq \calA_s(\bar\QQ)$ of finite rank $r$, we have
	\[
	\#\{P\in \calX_s(\bar \QQ)\cap\Gamma: \hat h(P)>c\max\{1,h(s)\}\}\leq c^{1+r}.
	\]
\end{prop}
\begin{proof}
Induct on $\dim S$. The case $\dim S=0$ follows from classical Vojta's and Mumford's inequalities. Consider $\dim S>0$. Assume $S$ is integral, smooth and affine by induction. Assume $\calX\rightarrow S$ is flat. Assume that the line bundle $\calL$ is relatively \emph{very ample} and  defines a fiber-wise projectively normal embedding $\calA\hookrightarrow\PP^n_S$ for some $n$. The general case follows easily by comparing the heights, since $\calL^{\otimes 4}$ satisfies the condition. See also the second paragraph of Step $3$ of the proof of Theorem \ref{curve}.

Let $\calY\subseteq \calA\times_S\calA$ be the intersection of the subvariety $\calX\times_S\calA$ with the image of the following embedding 
\[
\begin{split}
	\calX\times_S\calA&\hookrightarrow\calA\times_S\calA\\
	(x,a)&\mapsto (x-a,a).
\end{split}
\]Let $p_2:\calY\rightarrow \calA$ be the second projection. Then the fiber of $p_2$ over $a\in \calA_s(\bar\QQ)$ for $s\in S(\bar\QQ)$ is $\calX_s\cap(\calX_s-a)$. Since $\calX_s$ is GeM, the stabilizer of $\calX_s$ is finite. Note that $\calX_s\cap(\calX_s-a)$ is a proper subset of $\calX_s$ if $a$ is not in the stabilizer. Define $\Stab(\calX)$ as the reduced closed subvariety of $\calA$, over which the fibers of $p_2$ are of dimension two. Let $N$ be the largest number of points in any fiber of $\Stab (\calX)\rightarrow S$. For any $s\in S(\bar\QQ)$, for all but at most $N$ points  $a\in \calA_s(\bar\QQ)$, the fiber of $p_2:\calY\rightarrow \calA$ over $a$ is of dimension at most $1$. By the upper-semicontinuity of fiber dimension \cite[Corollaire 13.1.5]{EGAIV}, there exists a dense open subset $V\subseteq \calA$ whose projection to $S$ is surjective, such that the restricted family $\calY':=\calY\times_{\calA}V\rightarrow V$ is of relative dimension at most $1$ and all fibers are GeM. 

If the relative dimension is $0$, we trivially get the same result as in Theorem \ref{curve}. Otherwise, by applying Theorem \ref{curve} to the family of curves $\calY'\rightarrow V$, there exists an integer $c_1>1$ such that for any $s\in S(\bar\QQ)$, and any subgroup $\Gamma\subseteq\calA_s(\bar\QQ)$ of finite rank $r$ and any $a\in V_s(\bar\QQ)$, we have $\#(\calY'_a(\bar\QQ)\cap\Gamma)\leq c_1^{1+r}$. Let $D$ be a uniform upper bound for $\deg(\calX_s)$, with respect to $\calL_s$, for any $s\in S(\bar\QQ)$. We need the following lemma for Mumford's inequality:
	
	\begin{lemma}\label{lemma_isolatedinfiber}
		For any $s\in S(\bar\QQ)$, any $P\in\calX_s(\bar\QQ)$, any $\Gamma\subseteq\calA_s(\bar\QQ)$ of rank $r$ and any subset $\Sigma \subseteq\calX_s(\bar\QQ)\cap \Gamma$ of cardinality  $>N+D^2\cdot c_1^{1+r}$, there exist $P_1,P_2\in\Sigma $ such that $(P,P_1,P_2)$ is an isolated point in the fiber of the restricted Faltings--Zhang morphism $\calD_2:\calX_s^3\rightarrow\calA_s^2$.
	\end{lemma}
	 
	\begin{proof}
	Note that $(P,P_1,P_2)$ is isolated in the fiber over $(P_1-P,P_2-P)$ if and only if the local dimension of $(\calX_s-P)\cap (\calX_s-P_1)\cap (\calX_s-P_2)$ at the identity $0$ is zero. Since $\#\Stab(\calX_s-P)\leq N$, there are at most $N$ points $P_1\in\calA_s(\bar\QQ)$ such that $(\calX_s-P)\cap (\calX_s-P_1)=\calX_s-P$. Pick $P_1\in \Sigma$ such that $\dim ((\calX_s-P)\cap (\calX_s-P_1))\leq 1$. Assume $P_2$ is such that the local dimension of $(\calX_s-P)\cap (\calX_s-P_1)\cap (\calX_s-P_2)$ at the identity $0$ is not zero. Then $\calX_s-P_2$ must contain some irreducible component $0\in C$ of dimension one of $(\calX_s-P)\cap (\calX_s-P_1)$. This implies that 
	\[
	P_2\in \bigcap_{a\in C} (\calX_s-a)=\calX_s\cap \bigcap_{a\in C\setminus \{0\}}(\calX_s-a). 
	\]
	Note that the right hand side is contained in some $\calX_s\cap (\calX_s-a)$  of dimension $1$ for some $a\in (C\cap V_s)(\bar\QQ)$. So there are at most $c_1^{1+r}$ such $P_2\in \Gamma$ by the paragraph above the lemma. On the other hand, the number of such irreducible components $C$ is at most
	\[
	\sum_{C} \deg C\leq \deg(\calX_s)^2\leq D^2.
	\]
	So for $(P,P_1)$, there are at most $D^2\cdot c_1^{1+r}$ choices of $P_2$ such that $(P,P_1,P_2)$ is not isolated in the fiber of the Faltings--Zhang morphism. Hence the claim.
	\end{proof}
	
	Now we are ready to use Vojta's and Mumford's inequalities. Since $\deg\calX_s$ and $n$ are bounded for the family $\calX\rightarrow S$, we can take a uniform constant $c_2>1$ for both the constants obtained in Lemma \ref{Vojta's_inequality} and Lemma \ref{Mumford's_inequality}.
	
	Regard $\Gamma\otimes\RR$ as a normed vector space of dimension $r$. By \cite[Corollaire 6.1]{remond-decompte}, we can cover the vector space by at most $(1+\sqrt{8c_2})^r$ closed cones, such that for any $x,y$ in a same cone, we have $\langle x,y \rangle\geq (1-1/c_2)|x||y|.$
	
	Inside any one of the cones, consider the set of points in $\calX_s(\bar\QQ)\cap\Gamma$ whose N\'eron--Tate heights are greater than 
	\[
	c_2\max\{1,h(\calX_s),h_{1,s},c_{\text{NT},s}\}.
	\]
	Since the set is finite, we may arrange them so that $|P_0|\leq |P_1|\leq \dots$. Then Lemma \ref{Mumford's_inequality} and Lemma \ref{lemma_isolatedinfiber} tells us that for any number $B>c_2\max\{1,h(\calX_s),h_{1,s},c_{\text{NT},s}\}$, there are at most $N+D^2\cdot c_1^{1+r}$ points $P_j$ with 
	$B\leq|P_j|\leq (1+1/c_2)B.$ In particular, we have
	\[
	\left|P_{i+(N+D^2\cdot c_1^{1+r})}\right|>(1+1/c_2)|P_i|, \text{ for any }i.
	\]
	Let $N_1:=\lceil\log_{1+1/c_2}(c_2)\rceil$ so that $(1+1/c_2)^{N_1}\geq c_2$. Write
	\[
	N_2:=(N+D^2\cdot c_1^{1+r})\cdot N_1,
	\]which depends on $r$ as well. Then $|P_{i+N_2}|> (1+1/c_2)^{N_1}|P_i|\geq c_2|P_i|$ for any $i$. Then we cannot have more than $2N_2$ points in the cone, since otherwise the triple $(P_0,P_{N_2},P_{2N_2})$ contradicts Lemma \ref{Vojta's_inequality}.
	
	Overall, we have at most $(1+\sqrt{8c_2})^r$ cones and at most $2N_2$ large points in each cone, so
	\[
	\begin{split}
			\#\{P\in\calX_s(\bar\QQ)\cap \Gamma:\hat h(P)>c_2\max\{1,h(\calX_s),h_{1,s},c_{\text{NT},s}\}\}&\leq (1+\sqrt{8c_2})^r\cdot 2N_2\\
			=(1+\sqrt{8c_2})^r\cdot 2(N+D^2\cdot c_1^{1+r})\cdot N_1 \leq c_3^{1+r}
	\end{split}
	\]for some $c_3>0$. As in \cite[(8.4)(8.7)(8.8)]{DGH21}, we have 
	$h(\calX_s),h_{1,s},c_{\text{NT},s}\leq c_4\max\{1,h(s)\}$ for some $c_4>1$ which does not depend on $s$ and $\Gamma$. So take $c=\max\{c_2c_4,c_3\}$ and we are done.
\end{proof}

Finally we can give the proof of the main theorem of this paper.
\begin{proof}[Proof of Theorem {\ref{mainthm}}]
Induct on $\dim S$. The case $\dim S=0$ holds trivially since we can make $c$ large enough to exclude all the cases. Assume $\dim S>0$. Assume $S$ is integral, smooth and quasi-projective and assume $\calX\rightarrow S$ is flat. Then $\calX^n$ is irreducible since all closed fibers are (geometrically) irreducible and $S$ is irreducible.

By \cite[Theorem 1.3]{Gao_Bettirank} (see \cite{GaoErratum} for the correction), for $m$ large enough, $\calD_m(\calX^{m+1})\subseteq \calA^m$ is non-degenerate. By Theorem \ref{height_inequality}, there is a dense Zariski open subset $U$ of $\calD_m(\calX^{m+1})$ with constants $c_1,c_2>0$, such that for any $(Q_1-P,\dots,Q_m-P)\in U(\bar\QQ)$, we have
	\begin{equation}\label{height:1}
	\sum_i\hat h(Q_i-P)\geq m\cdot(c_1h(\iota_S(s))-c_2),
	\end{equation}where we use the pullback height on $S$ from $\bA_g$ and the N\'eron--Tate heights from the universal abelian scheme.
	We say two points $P,Q\in \calX_s(\bar\QQ)$ for $s\in S(\bar\QQ)$  are close if $\hat h(P-Q)<c_1h(\iota_S(s))-c_2$. By the induction hypothesis, we may moreover assume $U$ surjects onto $S$. We have the following lemma which is similar to Lemma \ref{Lemma_curve}. Remark that an at most curve is a scheme of (relative) dimension at most $1$.

	\begin{lemma}\label{lemma_surface}
		There exist $S$-varieties $S_0=S,S_1,\dots,S_m$ and algebraic families of (at most) curves $\calC_i\subseteq \calX_{S_i}\rightarrow S_i$ with the following property. For any $P\in \calX_s(\bar\QQ)$, one of the following holds
		\begin{enumerate}
			\item either $P\in(\calC_0)_s(\bar\QQ)$, or
			\item the points of $\calX_s(\bar\QQ)$ that are close to $P$ are on an at most curve $C=(\calC_i)_{s'}$ for some $1\leq i\leq m$ and $s'\in S_i(\bar\QQ)$ over $s\in S(\bar\QQ)$.
		\end{enumerate}
	 	\end{lemma}
	Remark that the varieties above can be empty.
	\begin{proof}
	By the inequality (\ref{height:1}), there is a proper Zariski closed subset $F_m$ of $\calX^{m+1}$ such that if $(P,Q_1,\dots,Q_m)\in\calX^{m+1}(\bar\QQ)$ is such that all $Q_i$'s are close to $P$, then $(P,Q_1,\dots,Q_m)\in F_m$. 
	
	Define $F_i$ decreasing-inductively for $0\leq i\leq m-1$ as follows. Assume $F_{i+1}$ is defined. Let $F_i$ be the largest closed subset of $\calX^{i+1}$ such that $
	\pi_{i+1}^{-1}(F_i)\subseteq F_{i+1}$ where $\pi_{i+1}:\calX^{i+2}\rightarrow \calX^{i+1}$ is the projection leaving out the last component. In other words, $F_i$ is the set of points of $\calX^{i+1}$ over which fibers of $\pi_{i+1}|_{F_{i+1}}$ are $2$ dimensional, which is closed due to upper semi-continuity of fiber dimension \cite[Corollaire 13.1.5]{EGAIV}. Let $V_i:=\calX^{i+1}\setminus F_i$ for $0\leq i\leq m$. Then by definition $F_{i+1}\xrightarrow{\pi_{i+1}}\calX^{i+1}$ is an at most curve over any point of $V_i$ for any $0\leq i\leq m-1$. 
	
	Since $V_m$ surjects to $S$ by the assumption above this lemma,  $F_i$ is a proper subset of $\calX^{i+1}$ for any $0\leq i\leq m$, even fiber-wise over any $s\in S$. In particular, $F_0\subseteq \calX$ is a family of (at most) curves over $S$. 
	
	Let $\calC_0\rightarrow S_0$ be $F_0\rightarrow S$.	Let $\calC_i\rightarrow S_i$ be $F_i\cap \pi_{i}^{-1}V_{i-1}\rightarrow V_{i-1}$ for $1\leq i\leq m$. Let $P\in \calX_s(\bar\QQ)$ for some $s\in S(\bar\QQ)$.
	
	If $P\notin(\calC_0)_s(\bar\QQ)$, then let $j(P)\geq 1$ be the smallest integer $j$ such that the set of points $(P,Q_1,\dots,Q_j)\in\calX^{j+1}_s(\bar\QQ)$ with $Q_i$ close to $P$ for any $1\leq i\leq j$, is contained in $F_j(\bar\QQ)$. Then $j(P)\leq m$. By definition of $j(P)$, there is a point $(P,Q_1,\dots,Q_{j(P)-1})$ with $Q_i$ close to $P$ for any $1\leq i\leq j(P)-1$ in the complement of $F_{j(P)-1}$, namely $V_{j(P)-1}$. Note that in the case $j(P)=1$, the above should be interpreted as: the set $\{Q_1,\dots,Q_{j(P)-1}\}=\emptyset$ and $P$ is in the complement of $F_0$, namely $V_0$.  
	
	By definition, any prolongation $(P,Q_1,\dots,Q_{j(P)-1},Q)$ with $Q$ close to $P$ is in $F_{j(P)}$. So the fiber of $\pi_{j(P)}:F_{j(P)}\rightarrow \calX^{j(P)}$ over the point $(P,Q_1,\dots,Q_{j(P)-1})$ contains all points that are close to $P$. The fiber of $F_{j(P)}$ over any point of $V_{j(P)-1}$, in particular $(P,Q_1,\dots,Q_{j(P)-1})$, is (at most) a curve in the family $\calC_{j(P)}\rightarrow S_{j(P)}$. So all points that are close to $P$ must live in this (at most) curve.
	\end{proof}
	
	Since all $\calX_s$ are GeM, all curves appearing in the families are GeM.  By applying Theorem \ref{curve} to these families of curves, there exists $c_3>0$ such that for any $0\leq i\leq m$, for any $s\in S(\bar\QQ)$ and a subgroup $\Gamma\subseteq \calA_s(\bar\QQ)$ of finite rank $r$, and for any $s'\in S_i(\bar\QQ)$ over $s$, we have $\#(\Gamma\cap (\calC_i)_{s'}(\bar\QQ))\leq c_3^{1+r}$. Let us remark that the degenerate case is trivial. Indeed, we can easily deduce a version of Theorem \ref{curve} with curves replaced by at most curves.
	
	Now if we take $s\in S(\bar\QQ)$ such that $h(\iota_S(s))>\max\{1,\frac{2c_2}{c_1}\}$, then $\frac{c_1}{2}h(\iota_S(s))<c_1h(\iota_S(s))-c_2$. This means, if two points $P,Q$ in $\calX_s(\bar\QQ)$ satisfy $\hat h(P-Q)\leq\frac{c_1}{2}h(\iota_S(s))$, then they are close. Consider a subgroup $\Gamma\subseteq \calA_s(\bar\QQ)$ of finite rank $r$. By induction hypothesis, we may assume $\iota_S:S\rightarrow \bA_g$ is the restriction of a finite morphism $\bar S\rightarrow \bar\bA_g$ for some compactification $\bar S$ of $S$; see Zariski's main theorem \cite[Corollaire 18.12.13]{EGAIVd}. Since the pullback along a finite morphism of an ample line bundle is ample, the height function on $\bar\bA_g$ fixed in \S\ref{section_notation_abelianschemes} induces a height function on $S$ by $\iota_S$ by composition. Note that although the induced height on $S$ is not necessarily from an immersion, a positive constant multiple is and it does not affect the result. So we can still apply Proposition \ref{large_points_dim_2} to see that there exists a constant $c_4>0$ such that the number of large points $P\in\calX_s(\bar\QQ)\cap\Gamma$ with $\hat h(P)>c_4\max\{1,h(\iota_S(s))\}=c_4h(\iota_S(s))$ is at most $c_4^{1+r}$. So it suffices to bound the number of points $P\in\calX_s(\bar\QQ)\cap\Gamma$ with $\hat h(P)\leq c_4h(\iota_S(s))$.
	
	To do this, we use a classical argument by regarding $\Gamma\otimes\RR$ as a normed vector space of dimension $r$ with norm given by $|\cdot|=\hat h^{1/2}$. Let $R_1:=\sqrt{c_4h(\iota_S(s))}$ and $R_2:=\sqrt{\frac{c_1}{2}h(\iota_S(s))}/2$. The closed ball of radius $R_1$ centered at $0$ is covered by at most $(1+2R_1/R_2)^r=(1+4\sqrt{2c_4/c_1})^r$ closed balls of radius $R_2$ by \cite[Lemme 6.1]{remond-decompte}. In any closed ball of radius $R_2$, we have two possibilities. One is that it contains only points in $(\calC_0)_s(\bar\QQ)\cap\Gamma$, which is fine since we know their number is bounded by $c_3^{1+r}$. The other is the opposite: it contains some point $P$ outside $(\calC_0)_s(\bar\QQ)\cap\Gamma$. But by our choice of $R_2$, all the points in the ball are close to $P$. So by Lemma \ref{lemma_surface}, they must belong to a curve $(\calC_i)_{s'}$ for some $1\leq i\leq m$ and $s'\in S_i(\bar\QQ)$ over $s\in S(\bar\QQ)$. Since $\#\left((\calC_i)_{s'}(\bar\QQ)\cap \Gamma\right)\leq c_3^{1+r}$, the number of points in the closed ball of radius $R_2$ is at most $c_3^{1+r}$. In any case, a closed ball of radius $R_2$ contains at most $c_3^{1+r}$ points. So the ball of radius $R_1$ contains at most $c_3^{1+r}\cdot (1+4\sqrt{2c_4/c_1})^r$ points. 
	
	Overall, we have
	\[
	\#(\calX_s(\bar\QQ)\cap\Gamma)\leq c_4^{1+r}+c_3^{1+r}\cdot (1+4\sqrt{2c_4/c_1})^r \leq c^{1+r}
	\]for  $c= \max\{1,2c_2/c_1,c_4+c_3(1+4\sqrt{2c_4/c_1})\}$ whenever $h(\iota_S(s))>c\geq \max\{1,2c_2/c_1\}$. Note that $c$ does not depend on $s\in S(\bar\QQ)$ or $\Gamma$.
\end{proof}

\section{Application}
Consider a smooth irreducible projective curve $C$ of genus $g\geq3$ over a number field $F$. Let $\Jac(C)$ be its Jacobian variety. Suppose there is a quadratic point $P\in C(F')$ for some quadratic extension $F'$ of $F$, and let $P'$ be its Galois conjugate.  The symmetric product $C^{(2)}:=\frac{C\times C}{S_2}$ corresponds to divisors of degree $2$ on $C$. It is well-known that the morphism
\[
\begin{split}
	C^{(2)}&\longrightarrow \Jac(C)\\
	\{Q_1,Q_2\}&\longmapsto [Q_1+Q_2-P-P']
\end{split}
\]
is injective on closed points if $C$ is not hyperelliptic. Let $W_2(C)$ be its scheme-theoretic image. It is shown in \cite[Theorem 2]{SilvermanHarris} that $W_2(C)$ contains no curves of genus $1$ if $C$ is neither hyperelliptic nor bielliptic. Clearly, $W_2(C)$ is not a translate of an abelian surface, since $C$ (hence $W_2(C)$) generates $J(C)$. Namely, $W_2(C)$ is GeM. Therefore Faltings' Theorem \cite{Faltings} tells us that there are only finitely many rational points on $W_2(C)$ which implies the same for $C^{(2)}$. Note that there is a map from the set of quadratic points $C(F,2)$ of $C$ to the set of rational points of $C^{(2)}$, whose fibers consist of at most two points; see the proof of \cite[Corollary 3]{SilvermanHarris} for details. Therefore $\#C(F,2)$ is finite in this case.

Now consider the moduli space $\bM_g$ of smooth irreducible projective curves of genus $g$ over $\bar\QQ$ with level-$l$-structure. It is representable by an irreducible quasi-projective variety over $\bar\QQ$, see for example \cite[Theorem 1.8]{OortSteenbrink}. It is well-known that the locus of hyperelliptic curves of genus $g$ forms a $(2g-1)$-dimensional closed subvariety $\bH_g$ in $\bM_g$; see \cite[XIII.8]{ACG2}. The locus of bielliptic curves of genus $g$ forms a $(2g-2)$-dimensional closed subvariety $\bM_g^{\mathrm{be}}$ in $\bM_g$; see \cite{biellipticlocus}. We are interested in the complement of these two loci and define $\bM_g^{\circ}:=\bM_g\setminus(\bH_g\cup\bM_g^{\mathrm{be}})$, which is an open subset of dimension $3g-3$ in $\bM_g$. 

Let $\gothC_g^\circ\rightarrow \bM_g^\circ$ be the universal non-hyperelliptic non-bielliptic curve of genus $g$ with level-$l$-structure. Let $\calJ_g^\circ:=\Jac(\gothC_g^\circ/\bM_g^\circ)$ be the relative Jacobian. It is a principally polarized abelian scheme with an induced level-$l$-structure. To embed the curves in their Jacobians, we need a quasi-section. By \cite[Corollaire 17.16.2 and 17.16.3]{EGAIVd}, there exists an \'etale surjective quasi-finite morphism $S\rightarrow \bM_g^\circ$ for some affine variety $S$, such that the base change $\gothC:=\gothC_g^\circ\times_{\bM_g^\circ}S\rightarrow S$ has a section $i:S\rightarrow\gothC.$
Write $\calJ\rightarrow S$ for the base change of $\calJ_g^\circ\rightarrow\bM_g^\circ$.
Then we embed $\calC\rightarrow S$ into $\calJ\rightarrow S$ using the section $i$, by sending $\calC_s$ to $\calC_s-i(s)\subseteq \calJ_s$ for any $s\in S$. 

The symmetric product $\calC^{(2)}:=\frac{\calC\times_S \calC}{S_2}$ of $\calC\rightarrow S$ exists since $S$ is quasi-projective. Let $\calX:=W_2(\calC)$ be the image of $\calC^{(2)}$ inside $\calJ$. Let $\iota_S:S\rightarrow \bA_g$ be the modular map induced by $\calJ\rightarrow S$. Then $\iota_S$ is quasi-finite since the Torelli morphism $\bM_g\rightarrow \bA_g$ is quasi-finite and $S\rightarrow \bM_g$ is quasi-finite.

The proof of Theorem \ref{quadratic_points} is similar to \cite[Proof of Theorem 1.1]{DGH21}.

\begin{proof}[Proof of Theorem {\ref{quadratic_points}}]
	Let us fix $l=3$ for this proof and consider level-$3$-structure only.
	
	Take a non-hyperelliptic non-bielliptic smooth geometrically irreducible projective curve $C$ of genus $g$ defined over some number field $F$ with $[F:\QQ]\leq d$. Let $s_0\in S(\bar\QQ)$ be a point over which the fiber $\calX_{s_0}$ corresponding to an embedding of $C^{(2)}$ in the Jacobian (over $\bar\QQ$). Let $s_1:=\iota_S(s_0)\in \bA_g(\bar\QQ)$. The Jacobian of $C$ corresponds to an $F$-rational point $s_2$ of $\bA_{g,1}$, the coarse moduli space of principally polarized abelian varieties of dimension $g$ without level structure.  The fine moduli space of principally polarized abelian varieties of dimension $g$ with level-$3$-structure $\bA_g$ is a quasi-finite cover of $\bA_{g,1}$ with $\bA_g\rightarrow \bA_{g,1}$ defined over $\QQ(\zeta_3)$.  Let $m$ be the maximal geometric cardinality of a fiber of  $\bA_g\rightarrow \bA_{g,1}$. Then $[\kappa(s_1):F]\leq 2m$, whence $[\kappa(s_1):\QQ]\leq 2md$.
	
	By applying Theorem \ref{mainthm} to the family $\calX\rightarrow S$ inside $\calJ\rightarrow S$, there exists a constant $c>1$ such that for any $s\in S(\bar\QQ)$ with $h(\iota_S(s))>c$, and any subgroup $\Gamma\subseteq \calJ_s(\bar\QQ)$ of finite rank $r$, we have $\#(\calX_s(\bar\QQ)\cap\Gamma)\leq c^{1+r}$. The constant $c$ is independent of $s\in S(\bar\QQ)$ and $\Gamma$.

	On the other hand, by Northcott's property, there are only finitely many points defined over a number field of bounded degree $md$ in $\bA_g$ with height at most $c$. Say they correspond to principally polarized abelian varieties $A_1,\dots,A_N$. For this finite set, we apply R\'emond's estimate \cite[page 643]{DP02} to each individual abelian variety and the subvariety. For any $i$ and any GeM subvariety $X\subseteq A_i$, we have $\#(X(\bar\QQ)\cap\Gamma)\leq C_i^{1+r}$ for any subgroup $\Gamma\subseteq A_i(\bar\QQ)$ of rank $r$, where $C_i$ depends on the principally polarized abelian variety $(A_i,L_i)$ and $X$, but not on $\Gamma$. We are only interested in $\calX_s$ over some $A_i$. Hence we can find a constant which by abuse of notation we still denote as $c>1$, such that for any $s\in S(\bar\QQ)$ over some $A_i$, we have $\#(\calX_s(\bar\QQ)\cap\Gamma)\leq c^{1+r}$ for any subgroup $\Gamma\subseteq A_i(\bar\QQ)=\calJ_{s}(\bar\QQ)$ of finite rank $r$.

In short, we get uniformity for the fibers over those $s\in S(\bar\QQ)$ with large height or small degree of its defining field.
		
	To prove the theorem, we may assume there is a quadratic point $P\in C(F')$ for some number field $F'\supseteq F$ with $[F':F]\leq 2$ and let $P'$ denote its Galois conjugate. There is an $F$-morphism from $C^{(2)}$ to $\Jac(C)$ by sending $(Q_1,Q_2)$ to $[Q_1+Q_2-P-P']$. Denote the image by $W_2(C)$. By the discussion in the beginning of this section, $C^{(2)}\rightarrow W_2(C)$ is injective on points, due to the non-hyperelliptic assumption on $C$.  Note that $\Jac(C)=\calJ_{s_0}$. We regard $\Gamma=\Jac(C)(F)$ as a subgroup of $\calJ_{s_0}(\bar\QQ)$. By the Mordell--Weil Theorem, $\Gamma$ is of finite rank. Note also that $W_2(C)$ is a translate of $\calX_s$ by some point $a\in \calA_s(\bar\QQ)$. By our discussion, then
	\[
	\# W_2(C)(F)=\#(W_2(C)(\bar\QQ)\cap\Gamma)\leq\#(\calX_s(\bar\QQ)\cap\langle\Gamma, a\rangle)\leq c^{2+\rho}.
	\]
	
	Note that there is an (at most) two-to-one (not necessarily surjective) map  of sets
	\[
	\begin{split}
		C(F,2)&\rightarrow C^{(2)}(F)	\\
		P&\mapsto \{P,P'\}
	\end{split}
	\]
	where $P'$ is either the conjugate of $P$, or equal to $P$ if $P\in C(F)$. Therefore we have
	\[
	\#C(F,2)\leq 2\cdot \# W_2(C)(F)\leq 2c^{2+\rho}\leq (2c^2)^{1+\rho}.
	\]

\end{proof}

\appendix
\section{Proof of Theorem {\ref{Kuhne_Bogomolov}}}\label{Appendix B}

In this appendix, we reproduce the proof of Theorem \ref{Kuhne_Bogomolov} using the non-degenerate subvariety constructed in Proposition \ref{nondegeneracy}. The argument is a modification of the proof in Gao's survey \cite[Proposition 8.3]{Gao_survey}. We need the following Theorem (see \cite[Lemma 23]{Kuhne} and in particular \cite[Corollary 8.2]{Gao_survey} for the version below, which is a consequence of K\"uhne's equidistribution result \cite[Theorem 1]{Kuhne}. Fix $\bar\QQ\hookrightarrow \CC$.

\begin{theorem}\label{theorem_A}
	Let $S$ be a smooth quasi-projective variety over $\bar\QQ$. Let $\calA\rightarrow S$ be a principally polarized abelian scheme with level-$l$-structure. Let $\calX\subseteq \calA$ be a non-degenerate integral subvariety. Assume $\calX,\calA,S$ and the morphisms between them are defined over a number field $F$. Let $\mu$ be the equilibrium measure on $\calX(\CC)$, which is a constant multiple of $\omega^{\wedge \dim \calX}$ with $\omega$ the Betti $(1,1)$-form on $\calX$; see \cite[Proposition 2.2]{DGH21}. Then for any continuous function $f$ on $\calX(\CC)$ with compact support and $\epsilon>0$, there exists $\delta=\delta(f,\epsilon)>0$, such that the union
	\[
	\{x\in\calX(\bar\QQ):\hat h(x)\geq \delta\}\cup 
	\left \{x\in \calX(\bar\QQ): \left|\frac{1}{\#O(x)}\sum_{x'\in O(x)}f(x')- \int_{\calX(\CC)}f\mu\right|< \epsilon \right\}
	\]
	contains a dense open subset of $\calX$, where $O(x)$ denotes the Galois orbit of $x$ over $F$.
\end{theorem}

Now let us prove Theorem \ref{Kuhne_Bogomolov}.

Assume $\calC,\calA,S$ and the morphisms between them are defined over a number field $F$.

Induct on $\dim S$. By the induction hypothesis, we may assume without loss of generality that $S$ is smooth quasi-projective, $\calC\rightarrow S$ is flat and $\calC^{\mathrm{sm}}\rightarrow S$ is smooth. These assumptions will be convenient later. For the family of curves $\calC\rightarrow S$, there is an induced morphism from $S$ to the Hilbert scheme $\Hilb(\gothA_g/\bA_g)$. If this morphism is not generically finite, then the scheme-theoretic image, say $S'$, is a variety of lower dimension. Denote the family induced by $S'$ as $\calC'\rightarrow S'$ and the abelian scheme by $\calA'\rightarrow S'$. By induction, the result holds for the family $\calC'\rightarrow S'$. Since $\calC\rightarrow S$ is just the pullback of $\calC'\rightarrow S'$, we get the result for $\calC\rightarrow S$ naturally.

Thus we may as well assume $S\rightarrow \Hilb(\gothA_g/\bA_g)$ is generically finite. Then Proposition \ref{nondegeneracy} applies (the geometric generic fiber $\calC_{\bar\eta}$ is integral and generates $\calA_{\bar\eta}$; see the earlier short version of the proof of Theorem \ref{Kuhne_Bogomolov}) and there exists $n$ such that $\calC^n\subseteq\calA^n$ is non-degenerate. It is this non-degenerate subvariety that we will study. Write $\calX:=\calC^n$. Then $\Stab(\calX_{\bar\eta})$ is finite.

% Consider the following embedding of $\calX\times_S\calX$ in $\calX\times_S\calA^n$ (this construction also appears before Proposition \ref{nondegeneracy_for_FZ}):
%         \[
% 	\begin{split}
% 		\alpha:\calX\times_S \calX&\longrightarrow \calX\times_S \calA^n\\
% 		(\bx_1,\bx_2)&\longmapsto (\bx_1,\bx_2-\bx_1).
% 	\end{split}
% 	\] 

Consider the following variant of the restricted Faltings--Zhang morphism 
\[
\begin{split}
	\calD:\calX^{m+1}&\rightarrow \calA^{nm}\\
	(\bx_0,\dots,\bx_m) &\mapsto(\bx_1-\bx_0,\dots,\bx_m-\bx_{m-1}),
\end{split}
\]which is clearly defined over $F$. Since $\Stab(\calX_{\bar\eta})$ is finite, for $m$ large enough this map is of generic degree equal to $d:=\#\Stab(\calX_{\bar\eta})$. One way of seeing this is suggested to us by the referee as follows. Clearly the fiber over $\calD(\bx_0,\dots,\bx_m)$ is
$\bigcap_{i=0}^m (\calX-\bx_i).$ An easy inductive argument shows that the generic sequence of $\bx_0,\dots,\bx_m$ for any $m\geq \dim (\calX_{\bar\eta})$ will cut down the dimension of the fiber to zero; see \cite[Lemme 2.1]{remond-inegalite} for details. Note that this is essentially because the containment of a closed subvariety is a closed condition. By intersecting once more using a generic point, we can always cut down the cardinality of the finite set to $\#\Stab(\calX_{\bar\eta})$ in $m=\dim (\calX_{\bar \eta})+1$ steps. Fix this $m$.

The image $\image\calD$ is not necessarily non-degenerate, but $\calX\times_S\image\calD\subseteq \calA^{n(m+1)}$ is non-degenerate (an easy exercise since a Betti map for the product abelian scheme is the product of two Betti maps for two abelian schemes). Write $\calY:=\calX\times_S\image\calD$. Write $\calD':=\id_{\calX}\times_S\calD:\calX^{m+2}\rightarrow\calY$. Note that $\calX^{m+2}\subseteq \calA^{n(m+2)}$ and $\calY\subseteq\calA^{n(m+1)}$ are irreducible and non-degenerate. We will compare the equilibrium measures $\mu_1$ on $\calX^{m+2}(\CC)$ and $\mu_2$ on $\calY(\CC)$. 

Claim: $\mu_1\neq (\deg\calD')^{-1}\calD'^*\mu_2$ near the point $(\bx_0,\bx_0,\dots,\bx_0)$, where $\bx_0$ is a fixed smooth algebraic point of $\calX$ such that the Betti rank at $\bx_0$ is maximal, whence the Betti form at $\bx_0$ is positive by \cite[Lemma 11]{Kuhne}. Note that here the pullback $\calD'^*\mu_2$ is defined through the pullback of the form defining the measure. Note also that $(\bx_0,\bx_0,\dots,\bx_0)$ is a smooth point of $\calX^{m+2}(\CC)$.

In fact, since $\calD'(\bx_0,\bx_0,\dots,\bx_0)=(\bx_0,0,\dots,0)$ and the fiber of $\calD'$ over $(\bx_0,0,\dots,0)$, containing all points of the form $(\bx_0,\bx,\dots,\bx)$, is of nonzero dimension, the differential of $\calD'$ has nonzero kernel. But $\mu_1\neq 0$ at $(\bx_0,\bx_0,\dots,\bx_0)$ since that the Betti form at $\bx_0$ is positive implies the Betti form at $(\bx_0,\dots,\bx_0)$ is positive; see for instance \cite[Lemma 25]{Kuhne}.

By the claim, there exists a continuous function $f'$  on $\calX^{m+2}(\CC)$ with compact support,  and some $\epsilon>0$ such that 
\begin{equation}\label{eqn_measure}
    \left| \int_{\calX^{m+2}(\CC)}f'\mu_1 -(\deg\calD')^{-1}\int_{\calX^{m+2}(\CC)} f'\calD'^*\mu_2\right|\geq 2\epsilon.
\end{equation}
We can actually choose $f'$ in a way such that $f'$ comes from a compactly supported function $f$ on $\calY(\CC)$. Indeed, spread out $\Stab(\calX_{\bar\eta})$ to a finite \'etale group scheme $G$ over $S$. Shrinking $S$ if necessary, assume moreover that $G(S)$ is naturally isomorphic to the generic fiber. Assume that $\Stab(\calX_s)=G_s$ for $s\in S(\CC)$. There is a \emph{Zariski dense} open subset $V\subseteq \calY$ such that $\calD'$ restricted to $U=\calD'^{-1}V\subseteq  \calX^{m+2}$ is a finite morphism such that the fibers are $G_s$-torsors for any point of $V$ over $s\in S(\CC)$. A standard analysis near $(\bx_0,\dots,\bx_0)$ shows that there exists $f'$ supported on a compact subset of $U$, such that \eqref{eqn_measure} holds. Simply take $f'^*(x):=\sum_{\sigma\in G(S)} f'(x+ \sigma(s))$ for any $x\in \calX^{m+2}_s(\CC)$. Also note that replacing $f'$ by $f'^*$ multiplies the left hand side of \ref{eqn_measure} by $\deg\calD'$ since $\mu_1,\calD'^*\mu_2$ are both invariant under ``translation by G(S)''. Then we can descend $f'^*$ to $f$ on $V$ since $f'^*$ is $G$-invariant. 

We apply Theorem \ref{theorem_A} twice, for the triples $(\calX^{m+2},f',\epsilon)$ and $(\calY,f,\epsilon)$ respectively. There is $\delta=\delta(f,\epsilon)>0$ such that for any $\bx\in U(\bar\QQ)$ and $\by\in V(\bar\QQ)$, we have
\[
\hat h(\bx)\geq \delta\quad\text{or}\quad  \left|\frac{1}{\#O(\bx)}\sum_{\bx'\in O(\bx)}f'(\bx')- \int_{\calX^{m+2}(\CC)}f'\mu_1\right|\geq \epsilon
\]and
\[
\hat h(\by)\geq \delta\quad\text{or}\quad \left|\frac{1}{\#O(\by)}\sum_{\by'\in O(\by)}f(\by')- \int_{\calY(\CC)}f\mu_2\right|\geq \epsilon.
\]
For an algebraic  point $\by\in V(\bar\QQ)$ and any algebraic point $\bx\in U(\bar\QQ)$ such that $\calD'(\bx)=\by$, it is clear that
\[
\frac{1}{\#O(\bx)}\sum_{\bx'\in O(\bx)}f'(\bx')=\frac{1}{\#O(\by)}\sum_{\by'\in O(\by)}f(\by'),
\]using the fact that $\calD'|_V$ is a finite morphism defined over $F$.

If we have
\[
 \left|\frac{1}{\#O(\bx)}\sum_{\bx'\in O(\bx)}f'(\bx')- \int_{\calX^{m+2}(\CC)}f'\mu_1\right|< \epsilon\quad\text{and}\quad \left|\frac{1}{\#O(\by)}\sum_{\by'\in O(\by)}f(\by')- \int_{\calY(\CC)}f\mu_2\right|< \epsilon,
\]
then 
\[
\left| \int_{\calX^{m+2}(\CC)}f'\mu_1 -\int_{\calY(\CC)} f\mu_2\right|< 2\epsilon,
\]
contradicting the choice of $f'$. So we must have either $\hat h(\bx)\geq\delta$ or $\hat h(\by)\geq\delta$. Assume $\bx=(\bz_0,\bz_1,\dots,\bz_{m+1})$. Then $\by=\calD'\bx=(\bz_0,\bz_2-\bz_1,\dots,\bz_{m+1}-\bz_m)$. So 
\[
\hat h(\by)\leq \hat h(\bz_0)+2[\hat h(\bz_2)+\hat h(\bz_1)]+\dots+2[\hat h(\bz_{m+1})+\hat h(\bz_m)]\leq 4\hat h(\bx).
\]
Therefore we always have $\hat h(\bx)\geq \frac{1}{4}\delta$.

Overall, we found a dense open subset $U$ of the fiber product $\calX^{m+2}=\calC^{n(m+2)}$ such that for any $\bx\in U(\bar\QQ)$, we have $\hat h(\bx)\geq \frac{1}{4}\delta$. Using the induction hypothesis, we may assume the projection $U\rightarrow S$ is surjective. Take $c_1=\frac{1}{4n(m+2)}\delta$. We claim that there is $c_2>0$ such that for any $s\in S(\bar\QQ)$, the number of small points
\[
\#\{P\in\calC_{s}(\bar\QQ):\hat h(P)<c_1\}\leq c_2.
\] 
This is an application of \cite[Lemma 7.3]{Gao_survey}. In fact, by the lemma, there is $c_2>0$, independent of $s\in S(\bar\QQ)$, such that if $\Sigma\subseteq\calC_s(\bar\QQ)$ and $\#\Sigma> c_2$, then $\Sigma^{n(m+2)}\cap U_s(\bar\QQ)\neq \emptyset$. But obviously 
\[
\{P\in\calC_s(\bar\QQ):\hat h(P)<c_1\}^{n(m+2)}\cap U_s(\bar\QQ)=\emptyset,
\]
by our choice of $c_1$. So we are done!

\section*{Acknowledgement}
I am extremely grateful to my advisor Dan Abramovich for introducing me to this interesting topic and the arithmetic world. His guidance and insights, both mathematically and mentally, help me survive the hard quarantine days. I am deeply indebted to Ziyang Gao for the enlightening discussions during the preparation of this paper and the insightful comments to the first draft. I would also like to thank Joseph Silverman for his wonderful lectures which triggered my interest in arithmetic geometry. 

I would like to thank the referee wholeheartedly for the thorough reading, insightful comments and selfless help. Without the referee's patience and tolerance, this paper would have been pointless.

\bibliographystyle{plain}
\bibliography{Arithmeticgeometry}
\end{document}